\title[A new lattice invariant]{A new lattice invariant for lattices in totally disconnected locally compact groups}
\author[B. Duchesne]{Bruno Duchesne}\thanks{B.D. is supported in part by French projects ANR-14-CE25-0004 GAMME and ANR-16-CE40-0022-01 AGIRA}
\author[R. Tucker-Drob]{Robin Tucker-Drob}
\author[P. Wesolek]{Phillip Wesolek}
\date{October 2018}
\newtheorem{thm}{Theorem}[section]
\newtheorem{prop}[thm]{Proposition}
\newtheorem{lem}[thm]{Lemma}
\newtheorem{cor}[thm]{Corollary}
\theoremstyle{definition}
\newtheorem{defn}[thm]{Definition}
\newtheorem{rmk}[thm]{Remark}
\newtheorem*{claim}{Claim}
\newtheorem*{ack}{Acknowledgments}
\newcommand{\Zb}{\mathbb{Z}}
\newcommand{\Nb}{\mathbb{N}}
\newcommand{\Rb}{\mathbb{R}}
\newcommand{\Qb}{\mathbb{Q}}
\newcommand{\mc}[1]{\mathcal{#1}}
\newcommand{\tdlc}{t.d.l.c.\@\xspace}
\DeclareMathOperator{\Aut}{Aut}
\newcommand{\normal}{\trianglelefteq}
\newcommand{\con}{\mathrm{con}}
\newcommand{\nub}{\mathrm{nub}}
\newcommand{\rk}{\mathrm{rk}}
\newcommand{\FC}{\mathrm{FC}}
\newcommand{\cFC}{\ol{\mathrm{FC}}}
\newcommand{\BC}{\mathrm{BC}}
\newcommand{\RLE}{\mathrm{Rad}_{\mc{RC}}}
\newcommand{\defeq}{\coloneqq}
\newcommand{\grp}[1]{\langle #1 \rangle}
\newcommand{\ol}[1]{\overline{#1}}
\begin{document}

\begin{abstract} We introduce and explore a natural rank for totally disconnected locally compact groups called the bounded conjugacy rank. This rank is shown to be a lattice invariant for  lattices in sigma compact totally disconnected locally compact groups; that is to say, for a given sigma compact totally disconnected locally compact group, some lattice has bounded conjugacy rank $n$ if and only if every lattice has bounded conjugacy rank $n$. Several examples are then presented.
\end{abstract}
\maketitle

\section{Introduction}

A property $P$ of groups is called a \textbf{lattice invariant} if whenever $\Delta$ and $\Gamma$ are lattices in a locally compact group $G$, then $\Delta$ has $P$ if and only if $\Gamma$ has $P$. Lattice invariants are closely related to invariants of measure equivalence, where two discrete groups $\Gamma$ and $\Delta$ are called \textbf{measure equivalent} if they admit commuting actions on a non-zero sigma-finite standard measure space with each action admitting a finite measure fundamental domain; see \cite{Fu11}.  Indeed, two lattices in the same locally compact group are measure equivalent, so any invariant of measure equivalence is also a lattice invariant.

Amenability and Kazhdan's property (T) are two fundamental examples of invariants of measure equivalence and so lattice invariants. However, natural examples of lattice invariants appear to be rather rare and examples which are not additionally measure equivalence invariants are all the more rare.

In the work at hand, we discover  and explore a new lattice invariant for lattices in totally disconnected locally compact (\tdlc) groups, which we call the bounded conjugacy rank. As will become clear, bounded conjugacy rank additionally fails to be an invariant of measure equivalence. This rank appears to be a natural and robust numerical invariant which warrants further exploration.

\begin{rmk} For our main theorems, we restrict our attention to  sigma compact locally compact groups. Sigma compact locally compact groups cover most locally compact groups of interest. In particular, every compactly generated locally compact group is sigma compact.
\end{rmk}

\subsection{Statement of results}
For $G$ a topological group and $H$ a subgroup, the \textbf{BC-centralizer} of $H$ in $G$ is
\[
\BC_G(H)\defeq \{g\in G\mid g^H\text{ is relatively compact}\},
\]
where $g^H$ denotes the set of $H$ conjugates of $g$. For $G$ a \tdlc group, a closed subgroup $A\leq G$ is said to be \textbf{thin on $G$} if $A$ normalizes a compact open subgroup of $G$ and $\BC_G(A)$ is a finite index open subgroup of $G$. The {\bf thin rank} of $A$, denoted by $\rk _t(A)$, is the supremum of the ranks of free abelian subgroups of $A/\RLE(A)$, where $\RLE(A)$ is the regionally compact radical.

\begin{defn}
For $G$ a \tdlc group, the {\bf bounded conjugacy rank} of $G$, denoted by $\rk_{BC}(G)$, is the supremum of $\rk _t(A)$ as $A$ ranges over all compactly generated closed subgroups of $G$ that are thin on $G$.
\end{defn}

We show that the bounded conjugacy rank is invariant under passing to both cocompact closed subgroups and finite covolume closed subgroups, so in particular, it is a lattice invariant for \tdlc groups.

\begin{thm}[Theorem~\ref{thm:rkcovol}]\label{thm:rkcovol_intro}
Suppose that $G$ is a sigma compact \tdlc group and $H$ is a closed subgroup of $G$.
\begin{enumerate}
\item If $H$ has finite covolume in $G$, then $\rk _{BC}(H)=\rk _{BC}(G)$.
\item If $H$ is cocompact in $G$, then $\rk _{BC}(H)=\rk _{BC}(G)$.
\end{enumerate}
In particular, if $\Gamma$ and $\Delta$ are any two lattices in $G$, then
\[
\rk _{BC}(\Gamma ) = \rk _{BC}(G)=\rk _{BC}(\Delta ) .
\]
\end{thm}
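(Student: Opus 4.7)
The plan is to establish both inequalities $\rk_{BC}(H) \leq \rk_{BC}(G)$ and $\rk_{BC}(G) \leq \rk_{BC}(H)$ under each of the two hypotheses separately; the concluding statement about lattices $\Gamma, \Delta$ is then immediate from (1), since any lattice is a closed subgroup of finite covolume. The strategy is to transfer witnesses --- compactly generated closed thin subgroups --- across the inclusion $H \leq G$ while checking that the invariant $\rk_t$ is preserved in the process.

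For the direction $\rk_{BC}(H) \leq \rk_{BC}(G)$, given $A \leq H$ compactly generated, closed, and thin on $H$, I would show that $A$ itself is thin on $G$. The BC-centralizer transfer is essentially formal: since $A \leq H$ we have $\BC_G(A) \cap H = \BC_H(A)$, which is open of finite index in $H$. Cocompactness (or finite covolume) of $H$ in $G$ then promotes this to $\BC_G(A)$ being open of finite index in $G$, using that any coset of $H$ is already accounted for by a compact transversal (cocompact case) or by a measure-theoretic saturation (finite covolume case). For the compact open subgroup condition, starting from a compact open $V \leq H$ normalized by $A$ and a compact open subgroup $U \leq G$ (via van Dantzig), I would form an $A$-stable compact open subgroup of $G$ by an intersection of finitely many $A$-conjugates of a suitable extension of $V$ inside $U$; since $A$ acts by continuous automorphisms and already normalizes $V$, only finitely many conjugates are needed. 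Since $\rk_t(A)$ depends only on the topological group $A$ and its intrinsic radical $\RLE(A)$, the rank computed in $H$ matches the rank computed in $G$.

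For the reverse inequality $\rk_{BC}(G) \leq \rk_{BC}(H)$, the natural candidate witness is $A \cap H$, but this requires care because it need not be compactly generated a priori. In the cocompact case, a Schreier-style argument uses compact generation of $A$ together with cocompactness of $H$ in $G$ to conclude that $A \cap H$ is cocompact in $A$, hence compactly generated and of the same $\rk_t$ after quotienting by $\RLE$. In the finite covolume case, an integration argument (exploiting unimodularity of $A$, which normalizes a compact open subgroup of $G$, and the finite $G$-invariant measure on $G/H$) shows that $A \cap H$ has finite covolume in $A$; the standard fact that a closed subgroup of finite covolume in a compactly generated \tdlc group is cocompact then reduces to the previous case. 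One finally checks that $A \cap H$ inherits both thinness on $H$ and the rank $\rk_t(A)$ from $A$, using that the quotient $A/(A \cap H)$ contributes nothing to the abelian rank since it is compact.

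The main obstacle will be the finite covolume case, where the topological crutch of cocompactness is unavailable and constructions such as averaging over compact transversals must be replaced by measure-theoretic substitutes. I expect the key technical lemma to be that compact generation plus thinness plus finite covolume forces cocompactness of the relevant intersections, so that after this reduction the cocompact case absorbs the finite covolume case. With the thin-subgroup transfer established in both directions, the final display $\rk_{BC}(\Gamma) = \rk_{BC}(G) = \rk_{BC}(\Delta)$ follows at once by applying (1) to each of $\Gamma$ and $\Delta$.
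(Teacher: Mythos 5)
Your overall architecture (two inequalities, transferring compactly generated thin witnesses across $H\leq G$, with the lattice statement following from (1)) matches the paper, but both of your transfer arguments have genuine gaps at exactly the points where the real work lies. In the direction $\rk_{BC}(H)\leq\rk_{BC}(G)$, your production of an $A$-invariant compact open subgroup of $G$ does not work: when $H$ is a lattice the compact open subgroup $V\leq H$ is trivial and carries no information about $G$, and for an infinite group $A$ (typically $A\cong\Zb^n$ discrete) there is no reason that finitely many $A$-conjugates of a compact open subgroup of $G$ suffice --- whether $A$ is uniscalar on $G$ is precisely the nontrivial content, and assuming a finite intersection does the job is circular. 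Likewise, ``promoting'' $\BC_H(A)$ of finite index in $H$ to $\BC_G(A)$ open of finite index in $G$ is not formal: a priori one only knows $\BC_G(A)$ contains a cocompact or finite covolume closed subgroup. The paper's route is to pass to a finite index $B_0\normal A$ with $B_0/\RLE(A)$ free abelian (Lemma~\ref{lem:KL}), use that the compact group $\RLE(A)$ normalizes a compact open subgroup of $G$ to get flatness of $B_0$ on $G$ from the Shalom--Willis theorem (Theorem~\ref{thm:common_tidy_sgrp}), and then apply Lemma~\ref{lem:flatcovol}, whose proof is genuine Willis theory (contraction groups, Proposition~\ref{prop:KHcovol}, Proposition~\ref{prop:BW_uniscalar}): flat plus $\BC_G$ cocompact or of finite covolume forces uniscalarity and hence finite index openness. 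This lemma is the heart of the finite covolume case and is absent from your sketch.

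In the direction $\rk_{BC}(G)\leq\rk_{BC}(H)$, the witness $A\cap H$ is the wrong object. Cocompactness of $H$ in $G$ gives $G=HK$ with $K$ compact but gives no control whatsoever on $A\cap H$ when $A$ is not open (this is the analogue of an irrational line meeting $\Zb^2$ trivially inside $\Rb^2$), so the ``Schreier-style'' claim that $A\cap H$ is cocompact in $A$ is unjustified; similarly, in the finite covolume case the $A$-orbit of the basepoint in $G/H$ can be null, so the invariant measure says nothing about $A\cap H$. Moreover, the ``standard fact'' you invoke --- that a closed finite covolume subgroup of a compactly generated \tdlc group is cocompact --- is false in general (e.g.\ $\mathrm{SL}_2(F_p[t])$ is a non-cocompact lattice in the compactly generated group $\mathrm{SL}_2(F_p((t)))$); it holds for groups thin on themselves, which is exactly what Lemma~\ref{lem:sgrp_thin_grp} provides, but only after the witness has been repaired. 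The paper's fix is the key trick you are missing: since $A$ is thin it is uniscalar (Corollary~\ref{cor:thin}) and so normalizes a compact open subgroup $U$; one replaces $A$ by the \emph{open} thin subgroup $AU$, whose thin rank equals $\rk_t(A)$, and intersects $AU$ (not $A$) with $H$. Openness of $AU$ guarantees $AU\cap H$ is cocompact, respectively of finite covolume, in $AU$, whence $\rk_t(AU\cap H)=\rk_t(A)$ by Lemma~\ref{lem:sgrp_thin_grp}, and $AU\cap H$ is thin on $H$ by Proposition~\ref{prop:thin_char} and Lemma~\ref{lem:hereditary}. Without this fattening step and without Lemma~\ref{lem:flatcovol}, neither inequality is actually established.
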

Example~\ref{ex:rkcovol} shows that one cannot naively extend Theorem~\ref{thm:rkcovol_intro} to all locally compact groups.

We go on to explore the properties of discrete groups $\Gamma$, e.g.\ lattices, with non-zero bounded conjugacy rank. For a subgroup $H$ of $\Gamma$, the \textbf{FC-centralizer} of $H$ in $\Gamma$ is the subgroup $\FC _{\Gamma}(H)\defeq \{ \gamma \in \Gamma \mid \gamma ^H \text{ is finite} \}$. For finitely generated groups, the bounded conjugacy rank has a straightforward characterization related to the classical notion of the $\FC$-center of $\Gamma$ - i.e.\ the subgroup $\FC _{\Gamma}(\Gamma )$.

\begin{prop}[Proposition~\ref{prop:rk_discrete}]
Let $\Gamma$ be a discrete group.
\begin{enumerate}
\item The value  $\rk _{BC}(\Gamma )$ equals the supremum of the ranks of finitely generated free abelian subgroups $A$ of $\Gamma$ for which the index of $\FC _{\Gamma}(A)$ in $\Gamma$ is finite.

\item If $\Gamma$ is a finitely generated group, then $\rk _{BC}(\Gamma )$ is the supremum of the ranks of finitely generated free abelian subgroups $A$ of $\Gamma$ which are contained in the $\FC$-center of $\Gamma$.
\end{enumerate}
\end{prop}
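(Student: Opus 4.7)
The plan is to work in the discrete setting, where for a subgroup $A\leq \Gamma$ the condition ``$A$ is thin on $\Gamma$'' reduces to $[\Gamma : \FC_\Gamma(A)]<\infty$ (relative compactness coincides with finiteness in a discrete group, and the compact-open-subgroup condition holds trivially since $\{e\}$ is compact open) and $\RLE(A)$ is the maximal normal locally finite subgroup of $A$. The easy inequalities then follow at once. For (1): any finitely generated free abelian $A\leq \Gamma$ with $[\Gamma:\FC_\Gamma(A)]<\infty$ is thin and has $\RLE(A)=1$ (as $A$ is torsion-free), giving $\rk_t(A)=\rk(A)$. For (2): if $A=\langle a_1,\dots,a_n\rangle\leq \FC(\Gamma)$, then $C_\Gamma(A)=\bigcap_i C_\Gamma(a_i)$ is finite index in $\Gamma$ and contained in $\FC_\Gamma(A)$, so the condition in (1) holds.

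The hard direction of (1) is to extract a rank-$n$ free abelian subgroup of $\Gamma$ from a finitely generated thin $A$ with $\rk_t(A)=n$; this lifting is the main obstacle. Choose lifts $x_1,\dots,x_n\in A$ of a free abelian basis of a rank-$n$ subgroup of $A/\RLE(A)$ and set $A_0\defeq\langle x_1,\dots,x_n\rangle$, which is finitely generated and thin with $A_0/(A_0\cap \RLE(A))\cong \mathbb{Z}^n$. The group $A_0$ need not be abelian, but the subgroup $A_1\defeq A_0\cap \FC_\Gamma(A_0)$ has finite index in $A_0$ (hence is finitely generated), and every element of $A_1$ has finite $A_0$-conjugacy class, so $A_1$ is a finitely generated FC-group. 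By B.\,H.\ Neumann's classical structure theorem, $A_1$ is center-by-finite with finite torsion subgroup $T(A_1)$. Since $A_1\cap \RLE(A)\subseteq T(A_1)$, the surjection $\mathbb{Z}^n\cong A_1/(A_1\cap \RLE(A))\twoheadrightarrow A_1/T(A_1)$ has finite (hence, by torsion-freeness of $\mathbb{Z}^n$, trivial) kernel, so $A_1/T(A_1)\cong \mathbb{Z}^n$. Passing to the finite-index $A_2\defeq C_{A_1}(T(A_1))$ and setting $T\defeq A_2\cap T(A_1)$, I obtain a central extension $1\to T\to A_2\to \mathbb{Z}^n\to 1$ with $T$ finite central. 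Picking lifts $e_1,\dots,e_n$ of a basis and $m\defeq |T|$, centrality of $T$ gives $[e_i^m,e_j]=[e_i,e_j]^m=1$, so $\langle e_1^m,\dots,e_n^m\rangle$ is free abelian of rank $n$. It lies in $A$, so its $\FC_\Gamma$-centralizer contains $\FC_\Gamma(A)$ and thus has finite index.

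For the reverse inequality in (2), assume $\Gamma$ is finitely generated and $A\leq \Gamma$ is finitely generated free abelian of rank $n$ with $[\Gamma:\FC_\Gamma(A)]<\infty$. Let $N$ be the normal core of $\FC_\Gamma(A)$ in $\Gamma$, a normal finite-index subgroup; as $\Gamma$ is finitely generated so is $N$, say by $n_1,\dots,n_s$. Each $n_i\in \FC_\Gamma(A)$ gives $[A:C_A(n_i)]<\infty$, so $B\defeq \bigcap_i C_A(n_i)=C_A(N)$ is finite index, hence free abelian of rank $n$, in $A$. To see $B\subseteq \FC(\Gamma)$, decompose $\Gamma=\bigsqcup_{j=1}^{k}\gamma_j N$ and fix $b\in B$; normality of $N$ together with $b\in C_A(N)$ gives $b_j\defeq \gamma_j^{-1}b\gamma_j\in C_\Gamma(N)$ via the direct computation $b_j n'=\gamma_j^{-1}b(\gamma_j n'\gamma_j^{-1})\gamma_j=\gamma_j^{-1}(\gamma_j n'\gamma_j^{-1})b\gamma_j=n' b_j$ for $n'\in N$. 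Hence for any $g=\gamma_j n\in \Gamma$ we have $g^{-1}bg=n^{-1}b_j n=b_j$, so $b^\Gamma\subseteq \{b_1,\dots,b_k\}$ is finite and $B\subseteq \FC(\Gamma)$ as required.
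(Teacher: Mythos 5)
Your proposal is correct, and the easy inequalities and part (2) essentially coincide with the paper's treatment; the genuine difference is in the hard direction of (1). The paper stays inside $A$: taking a finite generating set $F$ of $A\cap \FC_\Gamma(A)$ (which has finite index in $A$), each element of $F$ centralizes a finite-index subgroup of $A$, so a finite intersection of centralizers yields a finite-index \emph{abelian} subgroup of $A$; a further finite-index free abelian subgroup $B_0$ then satisfies $\rk(B_0)=\rk_t(A)$ by Lemma~\ref{lem:sgrp_thin_grp}, and $B_0$ is the desired witness. You instead lift a basis of a rank-$n$ free abelian subgroup of $A/\RLE(A)$, invoke the structure theory of finitely generated FC-groups (centre-by-finite with finite torsion subgroup), reduce to a central extension $1\to T\to A_2\to \Zb^n\to 1$ with $T$ finite, and take $|T|$-th powers of lifts. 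This buys independence from Lemma~\ref{lem:sgrp_thin_grp} (the rank bookkeeping is done by hand and the witness has full rank $n$ on the nose), at the cost of importing Neumann's theorem where the paper's centralizer-of-generators trick is more elementary. Two steps you should make explicit: the isomorphism $A_1/(A_1\cap \RLE(A))\cong \Zb^n$ uses that a finite-index subgroup of $\Zb^n$ is again free abelian of rank $n$; and the relations $[e_i^m,e_j]=1$ only show that $E\defeq \grp{e_1^m,\dots,e_n^m}$ is abelian --- to get freeness of rank $n$, observe that the image of $E$ in $A_2/T$ is $m\Zb^n$ and that any element $\prod_i (e_i^m)^{k_i}$ of $E\cap T$ has $mk_i=0$ for all $i$, hence is trivial. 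For (2), your detour through the normal core $N$ and the coset-representative computation can be shortened: once $B=C_A(N)$ with $[\Gamma:N]<\infty$, every $b\in B$ has $C_\Gamma(b)\supseteq N$ of finite index, so $b^\Gamma$ is finite, which is exactly the paper's argument.
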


The natural generalization to compactly generated \tdlc groups of part (2) of this proposition fails: Example \ref{ssec:pos_BC_rank_ex} exhibits a compactly generated \tdlc group with positive $\BC$-rank but with no non-trivial $\overline{\mathrm{FC}}$-elements.

Our work concludes with several additional examples. In particular, any discrete group with positive bounded conjugacy rank must be inner amenable (Proposition~\ref{prop:rk_discrete_inner_am}), where a group is \textbf{inner amenable} if it admits an atomless mean which is invariant under conjugation. Example~\ref{ex:lattice invariant}, due to P.E.\ Caprace, shows that inner amenability is not itself a lattice invariant, even for lattices in sigma compact \tdlc groups.

\begin{ack}
We would like to first thank Yair Hartman for his many contributions to this project. We thank Pierre-Emmanuel Caprace for explaining Example~\ref{ex:lattice invariant} to us and for allowing it to be included in the present work. We also thank Clinton Conley for discussions leading to Example \ref{ex:rkcovol}. \end{ack}

\section{Preliminaries}

We fix several notations, which will be used throughout. For $G$ a group, $H\leq G$, and $g\in G$, we write
\[
g^H\defeq \{hgh^{-1}\mid h\in H\}.
\]
If $G$ is a topological group, $\mathrm{Aut}(G)$ denotes the group of all topological group automorphisms of $G$. For $\mc{H}\subseteq \Aut(G)$ and $g\in G$, we write
\[
\mc{H}(g)\defeq \{\alpha(g)\mid \alpha\in \mc{H}\}.
\]
We use t.d.\ and l.c.\ to denote ``totally disconnected" and ``locally compact", respectively.

\subsection{Generalities on locally compact groups}\label{sec:prelim_lc}A locally compact group $G$ is \textbf{sigma compact} if it is a countable union of compact sets. It is easy to see that every compactly generated locally compact group is sigma compact, so this condition is rather mild. Every second countable locally compact group  is also sigma compact.

A closed subgroup $H$ of $G$ is said to be \textbf{cocompact} if the coset space $G/H$ is compact. We say that $H$ is of \textbf{finite covolume} if the coset space $G/H$ admits a $G$-invariant Borel probability measure, where $G$ acts on $G/H$ by left multiplication. A \textbf{lattice} of $G$ is a discrete subgroup of finite covolume.

We will require several basic facts about finite covolume subgroups and will often use these implicitly.
 \begin{thm}\label{thm:covolume}
	Suppose that $G$ is a locally compact group with $H$ and $K$ closed subgroups of $G$ such that $H\leq K \leq G$.
	\begin{enumerate}
	\item If $H$ is unimodular and cocompact in $G$, then $H$ is of finite covolume in $G$.
	\item The group $H$ is of finite covolume in $G$ if and only if $H$ is of finite covolume in $K$ and $K$ is of finite covolume in $G$.
	\end{enumerate}
\end{thm}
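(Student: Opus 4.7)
For part (1), the plan is to reduce to the classical modular function criterion: for $H$ a closed subgroup of a locally compact group $G$, the coset space $G/H$ carries a nonzero $G$-invariant Radon measure if and only if $\Delta_G|_H = \Delta_H$, and when it exists such a measure is unique up to positive scaling. Once this is verified, compactness of $G/H$ forces the invariant measure to be finite, yielding finite covolume. Since $H$ is unimodular we have $\Delta_H \equiv 1$, so it remains to prove $\Delta_G|_H \equiv 1$. The continuous homomorphism $\Delta_G \colon G \to \mathbb{R}_+^*$ induces a continuous $G$-equivariant surjection $G/H \twoheadrightarrow \Delta_G(G)/\Delta_G(H)$, so cocompactness of $H$ in $G$ forces cocompactness of $\Delta_G(H)$ in $\Delta_G(G) \leq \mathbb{R}_+^*$. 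I expect the main technical obstacle to be promoting this cocompactness to triviality of $\Delta_G(H)$; this is essentially the classical fact that a locally compact group containing a cocompact closed unimodular subgroup is itself unimodular, from which $\Delta_G|_H \equiv 1$ and finite covolume follow.

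For part (2), the central object is the canonical continuous $G$-equivariant surjection $\pi \colon G/H \to G/K$, whose fibers are homeomorphic to $K/H$ via translation. For the forward direction, let $\mu$ be a finite $G$-invariant measure on $G/H$. Its pushforward $\pi_*\mu$ is a finite $G$-invariant measure on $G/K$, establishing finite covolume of $K$ in $G$. Disintegrating $\mu$ over $\pi_*\mu$ yields a Borel family $\{\mu_{gK}\}$ of conditional probability measures on the fibers of $\pi$; the $G$-invariance of $\mu$, the $G$-equivariance of $\pi$, and uniqueness of disintegration together force $\mu_{gK} = g_*\mu_{eK}$ for $\pi_*\mu$-almost every $gK$, so $\mu_{eK}$ transports under translation to a $K$-invariant probability measure on $K/H$, giving finite covolume of $H$ in $K$.

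For the reverse direction, given a $G$-invariant probability $\nu$ on $G/K$ and a $K$-invariant probability $\lambda$ on $K/H$, fix a Borel section $s \colon G/K \to G$ and define a probability measure $\mu$ on $G/H$ by integrating, against $\nu$, the pushforward of $\lambda$ under the map $kH \mapsto s(gK)kH$. The $K$-invariance of $\lambda$ ensures that $\mu$ does not depend on the choice of section, and the $G$-invariance of $\nu$ then yields $G$-invariance of $\mu$, completing the argument.
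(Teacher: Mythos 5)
The paper itself does not prove this theorem: part (1) is quoted from Bekka--de la Harpe--Valette (Corollary B.1.8) and part (2) from Raghunathan (Lemma 1.6), so your attempt should be judged as a reconstruction of those classical arguments. Your reduction of (1) to Weil's criterion plus the triviality of $\Delta_G|_H$ is the right frame, but the proof stops exactly at the crucial point: the ``classical fact'' you invoke (a locally compact group with a cocompact closed unimodular subgroup is unimodular) is not an easier ingredient --- it is essentially the statement being proved, and in the literature the two are established together. The intermediate remark that $\Delta_G(H)$ is cocompact in $\Delta_G(G)$ also does not help: these images need not be closed in $\mathbb{R}_{>0}$, and the quotient of a non-closed subgroup can carry the indiscrete topology, so ``compact'' there has no content. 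The standard way to close the gap is short: since $\Delta_H\equiv 1$, the function $\rho=\Delta_G^{-1}$ is a rho-function for the pair $(G,H)$, so $G/H$ carries a nonzero Radon measure that is relatively invariant with character $\Delta_G^{\pm 1}$; compactness of $G/H$ makes this measure finite, and comparing total masses forces the character to be trivial, i.e.\ $G$ is unimodular and the measure is invariant and finite. Without some such argument, part (1) of your write-up records the reduction but not the proof.

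In part (2), the reverse direction (integrating the fibrewise translates of $\lambda$ against $\nu$) is correct as you describe it, granting a Borel section. The forward direction has a genuine gap: uniqueness of disintegration gives, for each \emph{fixed} $g$, that $\mu_{gx}=g_*\mu_x$ for $\pi_*\mu$-almost every $x$, with the exceptional null set depending on $g$; nothing lets you single out the particular coset $eK$ and assert that $\mu_{gK}=g_*\mu_{eK}$ almost everywhere, nor that $\mu_{eK}$ itself is $K$-invariant --- $eK$ could lie in every one of those null sets. One standard repair is a Fubini argument over Haar measure times $\pi_*\mu$: it yields a single coset $x_0$ such that $\mu_{x_0}$ is invariant under the stabilizer $g_0Kg_0^{-1}$ of $x_0$, and translating by $g_0^{-1}$ then produces the required $K$-invariant probability on $K/H$. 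Finally, both of your directions (disintegration, Fubini on Borel $\sigma$-algebras, Borel sections) silently assume a standard Borel setting, i.e.\ second countability, whereas the theorem is stated for arbitrary locally compact $G$; this is a milder issue, since the paper's applications are to sigma compact groups and its own reference carries similar hypotheses, but it should at least be flagged, or the argument recast in the Bourbaki style (transitivity of quotient measures) which avoids sections and disintegration altogether.
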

\begin{proof}
Claim (1) is given by \cite[Corollary B.1.8]{BeDhVa08}. Claim (2) is \cite[Lemma 1.6]{Rag72}.
\end{proof}

\begin{prop}\label{prop:KHcovol}
Suppose that $G$ is a sigma compact locally compact group with $H$ and $K$ closed subgroups such that $K\normal G$ and $G=HK$
\begin{enumerate}
	\item If $H\cap K$ is compact and $H$ is of finite covolume in $G$, then $K$ is compact.
	\item If  $H\cap K$ is compact and $H$ is cocompact in $G$, then $K$ is compact.
	\end{enumerate}
\end{prop}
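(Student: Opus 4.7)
My plan is to reduce both parts to a single structural fact: since $K$ is normal and $G=HK=KH$, the group $K$ acts continuously and transitively on $G/H$ by left translation, with stabilizer of the coset $H$ equal to $K\cap H$. I want to promote this to a $K$-equivariant homeomorphism
\[
\phi\colon K/(K\cap H) \longrightarrow G/H,\qquad k(K\cap H)\mapsto kH,
\]
so that both statements follow by reading them off from the transported data on the right-hand side.

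The map $\phi$ is continuous, bijective, and $K$-equivariant by direct inspection; I expect the main obstacle to be openness. To handle it, I would use that $K$ is closed in the sigma compact group $G$ and hence itself sigma compact. Writing $K=\bigcup_n C_n$ with $C_n$ compact gives $G/H=\bigcup_n C_n H$, and a Baire-category argument in the locally compact Hausdorff space $G/H$ produces some $C_n H$ with nonempty interior. The standard open-mapping argument for continuous transitive actions of sigma compact locally compact groups on locally compact Hausdorff spaces then yields openness of the orbit map $K\to G/H$, so $\phi$ is a homeomorphism. This is precisely where the hypothesis that $G$ is sigma compact is used.

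Granted $\phi$, part (2) is immediate: $G/H$ is compact, so $K/(K\cap H)$ is compact via $\phi$, and because $K\cap H$ is compact the quotient map $K\to K/(K\cap H)$ is proper, forcing $K$ to be compact. For part (1), I would transport a $G$-invariant Borel probability measure $\mu$ on $G/H$ through $\phi^{-1}$ to obtain a $K$-invariant Borel probability measure $\nu$ on $K/(K\cap H)$, which shows that $K\cap H$ has finite covolume in $K$. Since $K\cap H$ is also compact, Weil's integration formula identifies left Haar measure on $K$ with the integral of Haar measure on $K\cap H$ against $\nu$, having total mass $\nu(K/(K\cap H))\cdot \mathrm{Haar}(K\cap H)<\infty$. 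Hence $K$ has finite Haar measure and is therefore compact.
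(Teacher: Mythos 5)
Your argument is correct, and it reaches the paper's key intermediate fact --- that $\phi\colon K/(K\cap H)\to G/H$ is a $K$-equivariant homeomorphism --- by a genuinely different route. The paper does not run a Baire-category argument on the quotient space $G/H$ directly; instead it exploits the normality of $K$ to produce the continuous bijective homomorphism $\psi\colon H/(H\cap K)\to G/K$, invokes the open mapping theorem for continuous surjective homomorphisms between sigma compact locally compact groups to see that $\psi$ is a homeomorphism, and then verifies continuity of $\phi^{-1}\circ\pi_{G/H}$ via the explicit formula $\phi^{-1}\circ\pi_{G/H}(g^{-1})=g^{-1}\psi^{-1}\circ\pi_{G/K}(g)$. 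You instead apply the open mapping theorem for continuous transitive actions (the Effros-type statement: a sigma compact locally compact group acting continuously and transitively on a locally compact Hausdorff space has open orbit maps) to the $K$-action on $G/H$, using that $K$, being closed in the sigma compact group $G$, is itself sigma compact; as stated, the Baire step should be run with translates of a small symmetric compact identity neighborhood rather than with arbitrary compact pieces $C_n$, but that is exactly what the standard theorem you cite does, so there is no gap. Your approach is more direct and self-contained at the level of the action, while the paper's trick lets it quote only the homomorphism version of the open mapping theorem (a textbook reference), at the cost of the slightly fiddly formula above. The endgame also differs mildly: for (1) the paper transports the invariant probability measure exactly as you do, but then concludes via transitivity of finite covolume (Theorem~\ref{thm:covolume}(2)) that the trivial subgroup has finite covolume in $K$, hence Haar measure on $K$ is finite, whereas you invoke Weil's quotient integration formula for the compact subgroup $K\cap H$; both are standard and equally valid, and your explicit treatment of (2) via properness of $K\to K/(K\cap H)$ fills in what the paper dismisses as ``similar.''
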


\begin{proof} The map $\varphi : K/(H\cap K)\rightarrow G/H$ defined by $\varphi (x)=xH$ for $x\in K/(H\cap K)$ is continuous, bijective, and $K$-equivariant. We now argue that $\varphi$ is in fact a homeomorphism. It suffices to show the composition $\varphi ^{-1}\circ \pi _{G/H}$ is continuous, where $\pi _{G/H}:G\rightarrow G/H$ is the quotient map.

Since $K$ is normal in $G=HK$, the map $\psi : H/(H\cap K) \rightarrow G/K$, defined by $\psi (y)= yK$ for $y\in H/(H\cap  K)$, is a continuous isomorphism of sigma compact locally compact groups, hence it is a homeomorphism; see \cite[Theorem 5.33]{HeKe79}. The composition $\psi ^{-1} \circ \pi _{G/K} : G\rightarrow H/(H\cap K)$ is a continuous homomorphism to the group $H/(H\cap K)$. Since $G$ acts on $G/(H\cap K)$ continuously by left multiplication, the map $G\to K/(H\cap K)$ by
\[
g\mapsto g^{-1}\psi ^{-1}\circ\pi _{G/K}(g)
\]
is continuous. Composing with $\varphi$ yields
\[
\varphi (g^{-1}\psi ^{-1}\circ \pi _{G/K}(g))=g^{-1}\psi ^{-1}\circ\pi _{G/K}(g)H = g^{-1}H = \pi _{G/H}(g^{-1}) .
\]
We deduce that $\varphi ^{-1}\circ \pi _{G/H} (g^{-1}) = g^{-1}\psi ^{-1}\circ \pi _{G/K}(g)$, which shows that $\varphi ^{-1}\circ \pi _{G/H}$ is continuous. Hence, $\varphi$ is a homeomorphism.

For (1), suppose $H$ is of finite covolume in $G$ and let $\mu$ be a $G$-invariant Borel probability measure on $G/H$. Since $\varphi$ is a homeomorphism,  the push forward $(\varphi ^{-1})_*\mu$ is then a $K$-invariant Borel probability measure on $K/(H\cap K)$, so $H\cap K$ is a compact subgroup of $K$ of finite covolume in $K$. Applying Theorem~\ref{thm:covolume}, the trivial subgroup is of finite covolume in $K$, and this implies that the Haar measure on $K$ is finite. Hence, $K$ is compact, establishing (1).

Claim (2) is similar.
\end{proof}

A locally compact group $G$ is called \textbf{regionally compact} if for every finite set $F\subseteq G$, the subgroup $\grp{F}$ is relatively compact. This property is sometimes called ``locally elliptic" or ``topologically locally finite"; see \cite[Remark 1.0.1]{CRW17} for an explaination of this terminology change. By classical work of V.P. Platonov \cite{Plat65}, every locally compact group $G$ admits a unique largest regionally compact normal subgroup called the \textbf{regionally compact radical} and denoted by $\RLE(G)$. Furthermore, $\RLE(G/\RLE(G))=\{1\}$. Proofs of these facts can be found in \cite[Section 2]{Cap09}

A locally compact group $G$ is said to be $\cFC$ if $g^G$ is relatively compact for every $g\in G$. An abstract group is called $\FC$ is every element has a finite conjugacy class. The following result is also classical, but we give a proof for completeness.

\begin{thm}[Usakov, \cite{Us63}]\label{thm:usakov} Suppose that $G$ is a compactly generated \tdlc group. If $G$ is an $\cFC$ group, then $\RLE(G)$ is compact and $G/\RLE(G)$ is free abelian.
\end{thm}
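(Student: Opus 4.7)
The plan is to reduce the statement to the classical (discrete) form of Usakov's theorem, which asserts that a finitely generated abstract $\mathrm{FC}$ group is virtually free abelian with finite torsion. The bridge between the two settings is the production of a compact open \emph{normal} subgroup $K\leq G$; once this is done, $Q:=G/K$ is a discrete finitely generated $\mathrm{FC}$ group, to which the classical result applies, and the desired structure is obtained by pulling back.

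The main and most delicate step is producing $K$. Let $U$ be a compact open subgroup of $G$ (van Dantzig) and fix a compact symmetric generating set $S$. Acting on the associated Cayley--Abels graph $\Gamma$ with vertex set $G/U$ and edges determined by $S$, every $g\in G$ has displacement $d(g):=\sup_{v\in\Gamma}d_\Gamma(gv,v)<\infty$: indeed $g^G$ is relatively compact, so $g^G\cdot U$ sits in a compact open set and meets only finitely many cosets of $U$. Writing $G=\bigcup_{r\geq 0}B_r$ with $B_r:=\{g\in G:d(g)\leq r\}$ closed and conjugation-invariant, Baire category yields an $r$ for which $B_r$ has non-empty interior; the triangle inequality $d(gh)\leq d(g)+d(h)$ and a translation by an interior point then shows that $B_{2r}$ is a compact conjugation-invariant neighborhood of $1$. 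Together with the $\cFC$ hypothesis this yields the SIN property for $G$---a basis of compact open normal subgroups---so in particular we obtain a compact open normal subgroup $K$ of $G$.

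With $K\triangleleft G$ compact open, $Q:=G/K$ is a discrete finitely generated $\mathrm{FC}$ group. By Schur's theorem, $[Q,Q]$ is finite and so $Q$ is virtually abelian; its abelianization decomposes as $Q^{\mathrm{ab}}\cong\mathbb{Z}^n\oplus T$ with $T$ finite. The preimage $\overline N\leq Q$ of $T$ under $Q\to Q^{\mathrm{ab}}$ is a finite characteristic subgroup with $Q/\overline N\cong\mathbb{Z}^n$. Pulling $\overline N$ back, its preimage $N\leq G$ is a compact-by-finite, hence compact, normal subgroup of $G$ with $G/N\cong\mathbb{Z}^n$. The inclusion $N\subseteq\RLE(G)$ is immediate from $N$ being compact and normal; conversely, $\RLE(G)/K$ is a regionally compact---equivalently, since $Q$ is discrete, locally finite---normal subgroup of $Q$, and in any finitely generated virtually-$\mathbb{Z}^n$ group the maximal locally finite normal subgroup is precisely the finite torsion subgroup $\overline N$. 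Therefore $\RLE(G)=N$ is compact and $G/\RLE(G)\cong\mathbb{Z}^n$ is free abelian. The main obstacle throughout is the construction of the compact open normal subgroup $K$; once this is in place the remaining arguments are a routine combination of classical FC-group theory with the bookkeeping of preimages.
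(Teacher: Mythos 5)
Your reduction has a genuine gap at exactly the step that carries the weight of the theorem: the passage from the compact conjugation-invariant neighborhood $B_{2r}$ to a compact open \emph{normal} subgroup $K$. You assert that the IN property ``together with the $\cFC$ hypothesis yields the SIN property,'' but this implication is false. Take $G=\bigl(\prod_{n\in\Zb}\Zb/p\Zb\bigr)\rtimes\Zb$ with $\Zb$ acting by the shift: this group is compactly generated, \tdlc, and $\cFC$ (every conjugacy class lies in a single compact coset of the compact open normal subgroup $\prod_{n\in\Zb}\Zb/p\Zb$), hence IN, yet it has no small invariant neighborhoods, since the only shift-invariant open subgroup of $\prod_{n\in\Zb}\Zb/p\Zb$ is the whole group. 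What your argument actually needs is weaker --- the existence of a single compact open normal subgroup --- and that statement is true, but it is not formal: a compact invariant neighborhood $B$ containing a compact open subgroup $U$ only tells you that all conjugates $gUg^{-1}$ lie in $B$, and the subgroup they generate need not obviously be compact. Supplying this is precisely the nontrivial content; the paper gets it from Wang's theorem (a compact normal subset consisting of topologically periodic elements generates a relatively compact subgroup), which shows $\RLE(G)$ is open, and then still needs a separate argument (a result of Reid--Wesolek on directed systems of compact normal subgroups) to make $\RLE(G)$ compact. So as written, your proof assumes its hardest ingredient.

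The gap is repairable from where you stand, and in a way that makes your route genuinely different from the paper's: since every compact open subgroup $W\leq B_{2r}$ meets at most $m$ left cosets of $U$ (with $m$ the number of cosets of $U$ meeting $B_{2r}$), one gets the uniform bounds $[U:U\cap gUg^{-1}]\leq m$ for all $g\in G$, and the topological Schlichting/Bergman--Lenstra theorem then produces a compact open normal subgroup commensurable with $U$; alternatively, note that every element of $\bigcup_{g}gUg^{-1}\subseteq B_{2r}$ is topologically periodic and invoke Wang's theorem as the paper does. With such a citation in place, your Cayley--Abels displacement and Baire category argument for the IN property is correct and attractive (it replaces the paper's use of Platonov's radical plus Wang for openness and Reid--Wesolek for compactness by a single geometric step plus one classical commensuration theorem), and your endgame --- $G/K$ a finitely generated discrete FC group, B.H.~Neumann/Schur, pulling back the finite torsion subgroup, and identifying it with $\RLE(G)$ --- is sound and parallels the paper's second and final paragraphs. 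But without a proof or citation for the existence of the compact open normal subgroup, the argument is incomplete at its core.
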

\begin{proof}
Let $P(G)$ be the collection of elements $g\in G$ such that $\ol{\grp{g}}$ is compact. By \cite[Corollary 1.5]{Wan71}, any compact normal set $K\subseteq P(G)$ is such that $\grp{K}$ is relatively compact. Each $g\in P(G)$ is then an element of a compact normal subgroup of $G$, since $G$ is $\cFC$. We deduce that the regionally compact radical contains $P(G)$ and is open.

The group $\tilde{G}\defeq G/\RLE(G)$ is a discrete group and every element has a compact conjugacy class. Every element of $\tilde{G}$ indeed has a finite conjugacy class, so $\tilde{G}$ is a finitely generated $\FC$ group. The group $\tilde{G}$ is also finitely generated, so the center of $\tilde{G}$ is of finite index. Therefore, $[\tilde{G},\tilde{G}]$ is finite by the Schur--Baer Theorem. On the other hand, $\tilde{G}$ must have a trivial regionally compact radical, so $[\tilde{G},\tilde{G}]$ is trivial. We conclude that $\tilde{G}$ is free abelian.

It remains to show that $\RLE(G)$ is compact. Let $P$ be the collection of finite subsets of $\RLE(G)$. The collection $I\defeq \{\ol{\grp{F^G}}\mid F\in P\}$ is a directed system of compact normal subgroups of $G$, and $\ol{\bigcup I}=\RLE(G)$. Applying \cite[Theorem 1.9]{ReWe17}, there is  $N\in I$ and $K\normal G$ closed such that $K\leq \RLE(G)$, $K/N$ is compact, and $\RLE(G)/K$ is discrete. Since $N$ is compact, the subgroup $K$ is outright compact. The quotient $G/K$ is then a discrete $\cFC$ group, so it is an $\FC$ group. Arguing as in the second paragraph yields a closed normal subgroup $K'\normal G$ such that $K'/K$ is finite and $G/K'$ is torsion free abelian. We infer that $K'=\RLE(G)$, and thus $\RLE(G)$ is compact.
\end{proof}

\subsection{Willis theory}
Let $G$ be a \tdlc group. The \textbf{scale function} $s:\mathrm{Aut}(G)\rightarrow \Zb$ is defined by
\[
s(\alpha )\defeq \min\{|U:U\cap \alpha ^{-1}(U)|:U\text{ is compact and open}\}
\]
A compact open subgroup that witnesses the value $s(\alpha )$ is called a \textbf{tidy} subgroup for $\alpha$. A compact open subgroup $U$ is tidy for $\alpha$ if and only if it is tidy for $\alpha ^{-1}$; consider \cite[Corollary to Lemma 3]{Wi94}. If $s(\alpha )=1$ and $s(\alpha ^{-1})=1$, it then follows that there is a compact open subgroup of $G$ which is $\alpha$-invariant. In this case, the automorphism $\alpha$ is called \textbf{uniscalar}.

There are three important subgroups associated to $\alpha \in \Aut(G)$ and related to the scale.  The first is the \textbf{contraction group} of $\alpha$ in $G$:
\[
\con(\alpha )\defeq \{ x\in G\mid \alpha ^n (x) \rightarrow 1\text{ as }n\rightarrow +\infty\}.
\]
For $g\in G$, we write $\con(g)$ for the contraction group given by regarding $g$ as an inner automorphism.

The contraction group gives a way to isolate uniscalar automorphisms.
\begin{prop}[Baumgartner--Willis, {\cite[Proposition 3.24]{BW04}}]\label{prop:BW_uniscalar}
	For $G$ a \tdlc group and $\alpha \in \mathrm{Aut}(G)$, the following are equivalent:
\begin{enumerate}[(1)]
	\item $s(\alpha )=1$.
	\item $\con(\alpha ^{-1})$ is relatively compact.
\end{enumerate}
\end{prop}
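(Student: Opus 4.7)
The argument splits into the easy direction $(1) \Rightarrow (2)$ and the structural direction $(2) \Rightarrow (1)$. The plan is to dispatch the first with a direct unwinding of definitions and to reduce the second to Willis's tidy subgroup machinery.

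For $(1) \Rightarrow (2)$: First observe that $s(\alpha ) = 1$ is equivalent to the existence of a compact open subgroup $U \leq G$ with $\alpha (U) \subseteq U$, since $|U : U \cap \alpha^{-1}(U)| = 1$ amounts to $U \subseteq \alpha^{-1}(U)$. Given such $U$, iterating yields $\alpha^n(U) \subseteq U$ for every $n \geq 0$. For any $x \in \con (\alpha^{-1})$, the sequence $\alpha^{-n}(x)$ converges to $1$, so it lies in the neighborhood $U$ for all sufficiently large $n$, giving $x = \alpha^n(\alpha^{-n}(x)) \in \alpha^n(U) \subseteq U$. Hence $\con (\alpha^{-1}) \subseteq U$ is relatively compact.

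For $(2) \Rightarrow (1)$: I would invoke the tidy subgroup machinery. Fix a compact open subgroup $V$ that is tidy for $\alpha$, with the usual decomposition $V = V_+ V_-$, where $V_\pm \defeq \bigcap_{n \geq 0} \alpha^{\mp n}(V)$ satisfy $\alpha (V_+) \supseteq V_+$, $\alpha(V_-) \subseteq V_-$, and $s(\alpha) = [\alpha(V_+) : V_+]$. Set $V_{++} \defeq \bigcup_{n \geq 0} \alpha^n(V_+)$, the ascending union of the chain $V_+ \subseteq \alpha(V_+) \subseteq \alpha^2(V_+) \subseteq \cdots$. One inclusion is immediate from convergence: if $x \in \con(\alpha^{-1})$, pick $N$ so that $\alpha^{-n}(x) \in V$ for all $n \geq N$; then $x \in \bigcap_{n \geq N} \alpha^n(V) = \alpha^N(V_+) \subseteq V_{++}$, so $\con(\alpha^{-1}) \subseteq V_{++}$. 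The substantive input from Baumgartner--Willis is the approximate converse $V_{++} \subseteq \ol{\con(\alpha^{-1})} \cdot V_+$. Granting this, the hypothesis that $\con(\alpha^{-1})$ is relatively compact makes $V_{++}$ relatively compact as well, so the ascending chain $\{\alpha^n(V_+)\}$ of compact open subgroups inside the compact group $\ol{V_{++}}$ must stabilize. Since $\alpha$ is an automorphism, stabilization of the chain forces $V_+ = \alpha(V_+)$, whence $s(\alpha) = [\alpha(V_+) : V_+] = 1$.

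The main obstacle is the inclusion $V_{++} \subseteq \ol{\con(\alpha^{-1})} \cdot V_+$. Everything else --- the definitional unpacking of $s(\alpha) = 1$, the easy inclusion $\con(\alpha^{-1}) \subseteq V_{++}$, the chain-stabilization argument, and the forward implication --- is essentially formal. Showing that every element of $\alpha^n(V_+)$ can be approximated by a product $c \cdot v$ with $c \in \con(\alpha^{-1})$ and $v \in V_+$ is the technical heart of the tidy structure theory; it requires exploiting the $V_+ / V_-$ decomposition to split iterates of $\alpha$ into a ``contracting'' piece on one factor and a ``stable'' piece on the other, and this is precisely where the full strength of Willis's theory (rather than the definitions alone) is needed.
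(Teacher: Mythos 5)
Your direction (1)$\Rightarrow$(2) is correct and complete: $s(\alpha)=1$ gives a compact open $U$ with $\alpha(U)\subseteq U$, and then $\con(\alpha^{-1})\subseteq U$. (Note the paper itself offers no proof of this proposition -- it is quoted from Baumgartner--Willis -- so I am judging your argument on its own terms.) Before the main point, one bookkeeping slip: with your definition $V_{\pm}\defeq\bigcap_{n\geq 0}\alpha^{\mp n}(V)$ one gets $\alpha(V_+)\subseteq V_+$, which contradicts the properties you then use ($\alpha(V_+)\supseteq V_+$ and $s(\alpha)=[\alpha(V_+):V_+]$); you want $V_{\pm}\defeq\bigcap_{n\geq 0}\alpha^{\pm n}(V)$. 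With that fixed, your inclusion $\con(\alpha^{-1})\subseteq V_{++}$ is fine.

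The genuine gap is the chain-stabilization step in (2)$\Rightarrow$(1). The subgroups $\alpha^n(V_+)$ are compact but in general \emph{not} open in $G$ -- $V_+$ can even be trivial, as for the automorphism of $\Qb_p$ given by multiplication by $p$ -- and an increasing chain of compact subgroups of a compact group need not stabilize: inside $\prod_{n\in\Nb}\Zb/2\Zb$, the finite subgroups supported on the first $n$ coordinates form a strictly increasing chain. So relative compactness of $V_{++}$ alone does not bound the indices $[\alpha^n(V_+):V_+]=s(\alpha)^n$, and the sentence ``the ascending chain of compact open subgroups inside the compact group must stabilize'' does not apply as written. To repair it you need more of the tidy machinery than you acknowledge: for $V$ tidy one has that $V_{++}$ is closed and $V\cap V_{++}=V_+$, so $V_+$ is relatively open, hence of finite index, in the compact group $\ol{V_{++}}=V_{++}$, and then $s(\alpha)^n\leq[V_{++}:V_+]<\infty$ forces $s(\alpha)=1$. (A Haar-measure variant has the same defect: it needs $V_+$ to have positive measure, i.e.\ relative openness.) Finally, the inclusion $V_{++}\subseteq\ol{\con(\alpha^{-1})}\,V_+$, which you rightly identify as the crux, is simply granted; together with the closedness of $V_{++}$ and $V\cap V_{++}=V_+$ it carries essentially all the content, so as it stands your write-up is a reduction of the proposition to other results of Baumgartner--Willis rather than an independent proof.
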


Contraction groups are not closed in general; for instance, take the contraction group for the unit shift automorphism of $F^{\Zb}$ where $F$ is some non-trivial finite group. We use the notation $\ol{\con}(\alpha )$ for the closure of the contraction group.

The \textbf{nub} of an automorphism $\alpha$ of $G$ is
\[
\nub(\alpha )\defeq \bigcap\{U\mid U\text{ tidy for }\alpha \} .
\]
Lastly,
\[
\mathrm{P}_G(\alpha)\defeq  \{ x\in G \mid \{ \alpha ^n (x) \} _{n\geq 0} \text{ is relatively compact}\}.
\]
For $g\in G$, we write $\nub(g)$ and $\mathrm{P}_G(g)$, where $g$ is considered as an inner automorphism.

Let us note several properties of these subgroups.

\begin{prop}[Baumgartner--Willis, Willis]\label{prop:ccontraction-1}
	For $G$ a \tdlc group and $\alpha\in \mathrm{Aut}(G)$,
\begin{enumerate}
\item $\mathrm{P}_G(\alpha )$ is a closed subgroup of $G$,
\item $\ol{\con}(\alpha )=\con(\alpha )\nub(\alpha )$,
\item $\mathrm{con}(\alpha )$ is a normal subgroup of $\mathrm{P}_G(\alpha )$,
\end{enumerate}
\end{prop}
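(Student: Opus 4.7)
My plan is to handle the three claims in increasing order of difficulty, taking (3) first, (1) second, and (2) last. The argument for (3) is short and self-contained; (1) will use Willis's tidy subgroup theory; and (2) will additionally require the full Baumgartner--Willis structure theorem for $\ol{\con}(\alpha)$.

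For (3), I would fix $y\in\mathrm{P}_G(\alpha)$ and $x\in\con(\alpha)$ and set $K\defeq\ol{\{\alpha^n(y)\mid n\geq 0\}}$, which is compact by hypothesis. Given a neighborhood $V$ of $1$, I pick a compact open subgroup $W\subseteq V$. Since $N_G(W)$ is open in $G$ and $K$ is compact, the set of conjugates $\{kWk^{-1}\mid k\in K\}$ is finite, so $W'\defeq \bigcap_{k\in K}kWk^{-1}$ is itself a compact open subgroup, now normalized by $K$. For $N$ large enough, $\alpha^n(x)\in W'$ for all $n\geq N$, and then
\[
\alpha^n(yxy^{-1})=\alpha^n(y)\alpha^n(x)\alpha^n(y)^{-1}\in KW'K^{-1}=W'\subseteq V,
\]
so $\alpha^n(yxy^{-1})\to 1$, hence $yxy^{-1}\in\con(\alpha)$.

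For (1), that $\mathrm{P}_G(\alpha)$ is an abstract subgroup is immediate from the definition, since products and inverses of relatively compact sets remain relatively compact. For closedness, I would obtain a local description of $\mathrm{P}_G(\alpha)$ near the identity using a tidy compact open subgroup $U$ for $\alpha$: the tidy decomposition $U=U_+U_-$ lets one identify $\mathrm{P}_G(\alpha)\cap U$ with a closed subgroup expressible through $U_-$, and one can then write $\mathrm{P}_G(\alpha)$ as the subgroup generated by $\alpha^{-n}$-translates of this local piece together with $\con(\alpha)$. The remaining step, that the resulting subgroup is closed in $G$, follows from the stability properties of the tidy decomposition under iteration of $\alpha$.

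For (2), the inclusion $\con(\alpha)\nub(\alpha)\subseteq\ol{\con}(\alpha)$ is the easier direction: $\con(\alpha)\subseteq\ol{\con}(\alpha)$ is obvious, and $\nub(\alpha)\subseteq\ol{\con}(\alpha)\cap\ol{\con}(\alpha^{-1})$ is a theorem of Baumgartner--Willis characterising the nub as a minimal $\alpha$-invariant compact subgroup intrinsic to the contraction dynamics. The reverse inclusion is the principal obstacle: for each $x\in\ol{\con}(\alpha)$, one must produce a decomposition $x=cn$ with $c\in\con(\alpha)$ and $n\in\nub(\alpha)$. Here I would invoke the Baumgartner--Willis structure theorem for $\ol{\con}(\alpha)$, which realises $\ol{\con}(\alpha)/\con(\alpha)$ as a quotient of $\nub(\alpha)$ via the tidy decomposition and thereby delivers the decomposition. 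This is the crux of the proposition and requires the full machinery of the Baumgartner--Willis analysis rather than direct manipulation of the definitions.
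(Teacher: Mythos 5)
Your item (3) is the only place where you go beyond what the paper actually does (the paper's proof of all three items is a citation: (1) to \cite[Proposition 3]{Wi94}, (2) to \cite[Corollary 3.30]{BW04}, (3) to \cite[Proposition 3.4]{BW04}), and there your elementary argument is essentially right but contains a slip. With $K\defeq\ol{\{\alpha^n(y)\mid n\geq 0\}}$ only a compact \emph{set}, not a group, the asserted identity $KW'K^{-1}=W'$ for $W'\defeq\bigcap_{k\in K}kWk^{-1}$ fails in general: $k_0W'k_0^{-1}=\bigcap_{k\in K}(k_0k)W(k_0k)^{-1}$ and $k_0K\neq K$. Worse, your $W'$ is oriented the wrong way: taking $k'=k$ in the intersection gives $W'\subseteq kWk^{-1}$, i.e.\ $k^{-1}W'k\subseteq W$, whereas what you need is $kW'k^{-1}\subseteq V$. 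The repair is easy: either set $W'\defeq\bigcap_{k\in K}k^{-1}Wk$ (a finite intersection, covering $K$ by finitely many right cosets of the open subgroup $N_G(W)$), or drop subgroups entirely and use compactness of $K$ and continuity of $(k,g)\mapsto kgk^{-1}$ to produce a neighbourhood $W''$ of $1$ with $kW''k^{-1}\subseteq V$ for all $k\in K$. With that fix, your proof of (3) is complete, and your observation that $\mathrm{P}_G(\alpha)$ is an abstract subgroup is fine.

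The genuine gaps are in the closedness part of (1) and in (2). For (1), ``write $\mathrm{P}_G(\alpha)$ as the subgroup generated by $\alpha^{-n}$-translates of this local piece together with $\con(\alpha)$'' is not justified, and ``follows from the stability properties of the tidy decomposition'' is not an argument; what you are gesturing at is exactly the content of \cite[Proposition 3]{Wi94}, and your sketch does not reconstruct it. For (2), your treatment of the hard inclusion $\ol{\con}(\alpha)\subseteq\con(\alpha)\nub(\alpha)$ is circular: the ``Baumgartner--Willis structure theorem \dots{} which delivers the decomposition'' is precisely the statement being proved, namely \cite[Corollary 3.30]{BW04}, the very reference the paper cites; similarly $\nub(\alpha)\subseteq\ol{\con}(\alpha)$ is itself a nontrivial theorem (it follows, e.g., from the identification of $\nub(\alpha)$ with $\ol{\con}(\alpha)\cap\ol{\con}(\alpha^{-1})$), not a free input for a self-contained proof. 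If citation is the standard — as it is in the paper — then your (1) and (2) are acceptable but add nothing beyond the paper's pointer to \cite{Wi94} and \cite{BW04}; if the goal was an actual proof, the key step of (2) and the closedness in (1) are missing.
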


\begin{proof}
(1) is \cite[Proposition 3]{Wi94}; (2) is \cite[Corollary 3.30]{BW04}; and (3) is \cite[Proposition 3.4]{BW04}.
\end{proof}

The theory of the scale function admits an extension to subgroups of automorphisms, instead of single automorphisms.
\begin{defn}
A subgroup $\mathcal{H}\leq \mathrm{Aut}(G)$ is said to be \textbf{flat} on $G$ if there exists a compact open subgroup $U$ of $G$ which is simultaneously tidy for every $\alpha \in \mathcal{H}$. In this case, $U$ is said to be \textbf{tidy for $\mathcal{H}$}. A subgroup $H$ of $G$ is \textbf{flat} on $G$ if it is flat as a collection of inner automorphims. Flat groups of automorphisms first appear in \cite{Wi04}; the terminology used here is introduced in \cite{SW13}.
\end{defn}

Flat groups of automorphisms admit a canonical normal subgroup.
\begin{defn}
Let $\mathcal{H}\leq \mathrm{Aut}(G)$ be a flat group of automorphisms of a \tdlc group $G$. The \textbf{uniscalar subgroup} of $\mathcal{H}$ is the group $\mathcal{H}_u \defeq  \{ \alpha \in \mathcal{H} \mid s(\alpha ) =1 =s(\alpha ^{-1}) \}$. We say that $\mathcal{H}$ is \textbf{uniscalar} if $\mathcal{H}=\mathcal{H}_u$.
\end{defn}

The subgroup $\mathcal{H}_u$ is the collection of elements of $\mathcal{H}$ which normalize some compact open subgroup of $G$. The group $\mathcal{H}_u$ is a normal subgroup of $\mathcal{H}$. By \cite[Corollary 6.15]{Wi04}, $\mathcal{H}/\mathcal{H}_u$ is a free abelian group, and furthermore a strong converse holds.

\begin{thm}[{Shalom--Willis, \cite[Theorems 4.9 and 4.13]{SW13}}]\label{thm:common_tidy_sgrp}
Let $G$ be a \tdlc group and $\mathcal{N}\triangleleft \mathcal{H}\leq \mathrm{Aut}(G)$ with $\mathcal{N}$ normal in $\mathcal{H}$. Assume that there is a compact open subgroup $V$ of $G$ which is $\mathcal{N}$-invariant.
\begin{enumerate}
\item If $\mathcal{H}/\mathcal{N}$ is a finitely generated nilpotent group, then $\mathcal{H}$ is flat.
\item If $\mathcal{H}/\mathcal{N}$ is a finitely generated polycyclic group, then $\mathcal{H}$ has a finite index subgroup which is flat.
\end{enumerate}
\end{thm}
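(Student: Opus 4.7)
The strategy is induction on the nilpotency class (in case (1)) or Hirsch length (in case (2)) of $\mathcal{H}/\mathcal{N}$. The base case $\mathcal{H}=\mathcal{N}$ is immediate: the hypothesized $\mathcal{N}$-invariant compact open subgroup $V$ is itself a common tidy subgroup for $\mathcal{N}$, since $\nu(V)=V$ forces $|V:V\cap\nu^{-1}(V)|=1=s(\nu)=s(\nu^{-1})$ for every $\nu\in\mathcal{N}$.

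The main technical tool is a single-generator extension lemma: if $\mathcal{K}\triangleleft\mathcal{L}\leq\mathrm{Aut}(G)$ with $\mathcal{L}/\mathcal{K}$ cyclic generated by $\alpha\mathcal{K}$, and $W$ is a $\mathcal{K}$-invariant compact open subgroup of $G$, then one can find a compact open $W'$ that is still $\mathcal{K}$-invariant and tidy for $\alpha$, hence a common tidy subgroup for all of $\mathcal{L}$. To prove this, run Willis's tidying procedure on $W$ with respect to $\alpha$. The crucial observation is that $\mathcal{K}$-invariance is preserved throughout: since $\alpha$ normalizes $\mathcal{K}$, for any $\nu\in\mathcal{K}$ and $\mathcal{K}$-invariant compact open $U$ one has $\nu(\alpha^n(U))=\alpha^n((\alpha^{-n}\nu\alpha^n)(U))=\alpha^n(U)$, and similarly $\mathcal{K}$-invariance passes through intersections and closures of directed unions. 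Once $W'$ is $\mathcal{K}$-invariant and tidy for $\alpha$, a scale computation verifies that $W'$ is tidy for every $\nu\alpha^k\in\mathcal{L}$: invariance gives $(\nu\alpha^k)^{-1}(W')=\alpha^{-k}(W')$, so $|W':W'\cap(\nu\alpha^k)^{-1}(W')|=s(\alpha)^k$, while submultiplicativity of the scale together with the uniscalarity of $\nu$ forces $s(\nu\alpha^k)=s(\alpha)^k$.

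The main obstacle is that after a single cyclic extension the new common tidy $W'$ is only invariant under the original uniscalar base $\mathcal{N}$, not under the enlarged subgroup, so the lemma cannot be naively iterated with the enlarged group as the new base. For part (1), this is resolved by proving a multi-generator strengthening of the lemma: given $\alpha_1,\ldots,\alpha_k\in\mathrm{Aut}(G)$ each normalizing $\mathcal{N}$ and pairwise commuting modulo $\mathcal{N}$, the tidying procedures for the $\alpha_i$ can be run simultaneously, in the spirit of Willis's simultaneous-tidy result for commuting automorphisms, to produce an $\mathcal{N}$-invariant common tidy subgroup for $\langle\mathcal{N},\alpha_1,\ldots,\alpha_k\rangle$. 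One climbs the upper central series $\mathcal{N}=\mathcal{H}_0\triangleleft\mathcal{H}_1\triangleleft\cdots\triangleleft\mathcal{H}_c=\mathcal{H}$ of $\mathcal{H}$ modulo $\mathcal{N}$; at each layer, the relevant commutators have been absorbed into the previously tidied base, so generators of the next layer commute modulo the base in the sense required by the multi-generator lemma, and centrality in the nilpotent structure is precisely what makes this coordinated tidying possible.

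For part (2), a finitely generated polycyclic quotient $\mathcal{H}/\mathcal{N}$ admits a finite index subgroup that is torsion-free polycyclic with a refined subnormal series along which commuting-generator arguments apply; one applies the same simultaneous tidying strategy to this finite index subgroup $\mathcal{H}'\leq\mathcal{H}$ to conclude $\mathcal{H}'$ is flat. The loss of finite index in the conclusion is unavoidable because a genuinely polycyclic (not virtually nilpotent) quotient lacks the central structure needed to tidy all generators against a single uniscalar base in one pass.
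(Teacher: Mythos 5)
First, a point of reference: the paper does not prove this theorem; it is quoted with a citation to Shalom--Willis \cite{SW13}, so your proposal has to be measured against that source rather than an argument in the text. Your base case and the invariance bookkeeping in the single-generator lemma are sound: if $U$ is $\mathcal{K}$-invariant and $\alpha$ normalizes $\mathcal{K}$, then $\alpha^n(U)$, intersections, and the auxiliary sets in Willis's tidying procedure stay $\mathcal{K}$-invariant, and an $\mathcal{N}$-invariant subgroup tidy for $\alpha$ can indeed be produced this way (this is essentially the relative tidying step in \cite{SW13}). But your verification that such a $W'$ is tidy for every $\nu\alpha^k$ rests on ``submultiplicativity of the scale,'' which is false: the scale is not submultiplicative (two elements of $\Aut(T_3)$ fixing distinct vertices are uniscalar, yet their product can be hyperbolic of scale $>1$ -- uniscalar elements do not even form a subgroup), and the identity $|W':W'\cap(\nu\alpha^k)^{-1}(W')|=s(\alpha)^k$ only bounds $s(\nu\alpha^k)$ from above, which is the wrong direction for tidiness. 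The step is salvageable, but by a different route: since $\alpha^k$ normalizes $\mathcal{K}$ and the groups $\alpha^{kn}(W')$ and $W'_{\pm}$ are $\mathcal{K}$-invariant, one gets $(\nu\alpha^k)^n(W')=\alpha^{kn}(W')$, so the tidy-above and tidy-below conditions for $\nu\alpha^k$ reduce to those for $\alpha^k$, which hold because tidiness passes to powers.

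The serious gap is the induction. You correctly name the obstacle---after one extension the common tidy subgroup is only $\mathcal{N}$-invariant, not invariant under the enlarged group---but the proposed resolution does not meet it. Your multi-generator lemma needs generators that pairwise commute modulo a base admitting an invariant compact open subgroup. Climbing the upper central series $\mathcal{N}=\mathcal{Z}_0\triangleleft\mathcal{Z}_1\triangleleft\cdots\triangleleft\mathcal{Z}_c=\mathcal{H}$, that hypothesis is available only at the first layer: generators of $\mathcal{Z}_2$ commute merely modulo $\mathcal{Z}_1$, and $\mathcal{Z}_1$ is not a legitimate base, since it contains elements of nontrivial scale and therefore leaves no compact open subgroup invariant; the subgroup produced at stage one is tidy for, but not normalized by, those elements. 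So the assertion that ``the relevant commutators have been absorbed into the previously tidied base'' is exactly the unproved point, and controlling how tidiness for earlier generators survives while tidying later ones is the real content of the Shalom--Willis argument (already for $\mathcal{H}/\mathcal{N}$ abelian of rank $2$ one needs a relative version of Willis's commuting-automorphisms theorem, which you invoke only ``in the spirit of''). Finally, for part (2) the sketch contains no argument: you concede the central structure is missing, and passing to a finite index subgroup of a polycyclic group does not restore it (polycyclic groups need not be virtually nilpotent), so the origin of the finite index flat subgroup is left unexplained.
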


\subsection{BC-centralizers}
For $G$ a \tdlc group, the \textbf{BC-centralizer} of $\mathcal{H}\leq \mathrm{Aut}(G)$ in $G$ is
\[
\BC_G(\mathcal{H})\defeq \{g\in G\mid \mathcal{H}(g)\text{ is relatively compact}\}.
\]
For a subgroup $H$ of $G$, we may define
\[
\BC_G(H)\defeq \{g\in G\mid g^H\text{ is relatively compact}\}.
\]
The definition of the BC-centralizer for subgroups is the obvious restatement obtained by regarding a subgroup as a group of inner automorphisms. The set $\BC_G(\mathcal{H})$ is a subgroup of $G$, but it need not be closed in general.  For $\alpha \in \mathrm{Aut}(G)$, we write  $\BC _G(\alpha )$ for $\BC _G(\langle \alpha \rangle )$, and for $g\in G$, we write $\BC _G(g)$ for $\BC _G(\langle g\rangle )$.

The following proposition, established in \cite{Wi94} and \cite{BW04}, gives the relevant connections between the BC-centralizer of a cyclic group and the previously mentioned subgroups associated to an automorphism.

\begin{prop}[Baumgartner--Willis, Willis]\label{prop:ccontraction}
	For $G$ a \tdlc group and $\alpha\in \mathrm{Aut}(G)$,
\begin{enumerate}
\item  $\BC _G(\alpha )$ is a closed subgroup of $G$,
\item $\ol{\con}(\alpha )\cap\BC _G(\alpha ) =\nub (\alpha )$,
\item $\mathrm{con}(\alpha )\BC _G(\alpha ) = \mathrm{P}_G(\alpha )$.
\end{enumerate}
\end{prop}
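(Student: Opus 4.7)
The plan is to deduce each of (1)--(3) from the closedness of $\mathrm{P}_G(\alpha)$ in Proposition~\ref{prop:ccontraction-1}(1), the factorization $\ol{\con}(\alpha)=\con(\alpha)\nub(\alpha)$ from Proposition~\ref{prop:ccontraction-1}(2), and the structure theory for tidy subgroups of $\alpha$.

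Part (1) is the easiest: an element $g\in G$ has relatively compact two-sided $\alpha$-orbit if and only if both its forward and backward orbits are relatively compact, i.e.
\[
\BC_G(\alpha)=\mathrm{P}_G(\alpha)\cap \mathrm{P}_G(\alpha^{-1}).
\]
Both factors on the right are closed by Proposition~\ref{prop:ccontraction-1}(1), so their intersection is closed.

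For part (2), the inclusion $\nub(\alpha)\subseteq \ol{\con}(\alpha)\cap\BC_G(\alpha)$ is immediate: $\nub(\alpha)\subseteq \ol{\con}(\alpha)$ holds by Proposition~\ref{prop:ccontraction-1}(2), and $\nub(\alpha)$ is a compact $\alpha$-stable subgroup (the family of tidy subgroups is $\alpha$-invariant, hence so is its intersection), so it lies in $\BC_G(\alpha)$. For the reverse inclusion, the factorization $\ol{\con}(\alpha)=\con(\alpha)\nub(\alpha)$ together with $\nub(\alpha)\subseteq\BC_G(\alpha)$ reduces the task to showing that any $c\in\con(\alpha)\cap\BC_G(\alpha)$ lies in $\nub(\alpha)$. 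For each tidy $U$ for $\alpha$, the hypothesis $c\in\BC_G(\alpha)$ forces the two-sided orbit of $c$ to visit only finitely many cosets of $U$, while $\alpha^n(c)\to 1$ forces a forward tail to lie in $U$; the tidy decomposition $U=U_+U_-$ then lets one transport this inclusion back along the orbit to conclude $c\in U$. Intersecting over all tidy $U$ gives $c\in\nub(\alpha)$.

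For part (3), the inclusion $\con(\alpha)\BC_G(\alpha)\subseteq \mathrm{P}_G(\alpha)$ is immediate since $\mathrm{P}_G(\alpha)$ is a subgroup containing both factors. The reverse inclusion is the heart of the matter, and is where I expect the main obstacle. Given $g\in\mathrm{P}_G(\alpha)$, I would fix a tidy $U$ for $\alpha$ with its Baumgartner--Willis decomposition $U=U_+U_-$ and extract a contracting part $c\in\con(\alpha)$ so that $gc^{-1}$ has relatively compact two-sided orbit, i.e.\ lies in $\BC_G(\alpha)$. Following the forward $\alpha$-orbit of $g$ through the finitely many $U$-cosets it meets, one builds $c$ as a suitable limit in $U_+$; the bookkeeping in this construction is exactly the content of \cite[Proposition 3.4]{BW04}, which I would simply cite.
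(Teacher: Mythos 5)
Your part (1) is correct: $\BC_G(\alpha)=\mathrm{P}_G(\alpha)\cap\mathrm{P}_G(\alpha^{-1})$, and both factors are closed by Proposition~\ref{prop:ccontraction-1}(1); this is essentially how the closedness is obtained in \cite{Wi94}, which is all the paper itself does (its proof of the proposition consists only of citations: (1) to \cite{Wi94}, (2) and (3) to \cite{BW04}). In (2), the inclusion $\nub(\alpha)\subseteq\ol{\con}(\alpha)\cap\BC_G(\alpha)$ and the reduction of the reverse inclusion to showing $\con(\alpha)\cap\BC_G(\alpha)\subseteq\nub(\alpha)$ are also sound (your use of the $\alpha$-invariance of the family of tidy subgroups to get $\alpha(\nub(\alpha))=\nub(\alpha)$ implicitly relies on Willis's theorem that minimizing subgroups are exactly the tidy ones, but that is standard).

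The problems are in the remaining, genuinely hard, steps. In (2), the sentence ``the tidy decomposition $U=U_+U_-$ then lets one transport this inclusion back along the orbit to conclude $c\in U$'' is not an argument: knowing that $\alpha^n(c)\in U$ for all large $n$ and that the orbit meets only finitely many cosets of $U$ does not propagate membership in $U$ backwards along the orbit using tidiness above alone, and the actual proof of $\ol{\con}(\alpha)\cap\BC_G(\alpha)=\nub(\alpha)$ in \cite{BW04} (Lemma 3.29, which is what the paper cites) makes essential use of tidiness below, i.e.\ the closedness of $U_{++}$ and $U_{--}$, which your sketch never invokes. Since this inclusion is the entire content of (2), you must either supply that argument or cite it. In (3) you do intend to cite, but you cite the wrong result: \cite[Proposition 3.4]{BW04} is the statement that $\con(\alpha)$ is normal in $\mathrm{P}_G(\alpha)$ --- that is exactly how the paper uses it in Proposition~\ref{prop:ccontraction-1}(3) --- whereas the factorization $\mathrm{P}_G(\alpha)=\con(\alpha)\BC_G(\alpha)$ is \cite[Corollary 3.17]{BW04}. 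With the citation corrected, (3) is handled the same way as in the paper; with (2) either completed via tidiness below or replaced by the citation to \cite[Lemma 3.29]{BW04}, the whole proposal would be fine.
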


\begin{proof}
(1) is \cite[Proposition 3]{Wi94}; (4) is \cite[Lemma 3.29]{BW04}; and (5) is \cite[Corollary 3.17]{BW04}.
\end{proof}

Proposition~\ref{prop:ccontraction} ensures $\BC _G(\alpha )$ is closed for each $\alpha \in \mathrm{Aut}(G)$. Willis' proof of the relevant claim of Proposition~\ref{prop:ccontraction} in fact shows that $\BC _G(\mathcal{H})$ is closed whenever $\mathcal{H}$ is flat. We give a proof for completeness.

\begin{prop}[Willis]\label{prop:flatclosed}
Let $G$ be a \tdlc group and $\mathcal{H}\leq \mathrm{Aut}(G)$ be a flat group of automorphisms of $G$.
\begin{enumerate}
\item $\BC _G(\mathcal{H})$ is a closed subgroup of $G$.
\item If $U$ is a compact open subgroup of $G$ which is tidy for $\mathcal{H}$, then $U\cap \BC _G(\mathcal{H})= \bigcap _{\alpha \in \mathcal{H}}\alpha (U)$.
\end{enumerate}
\end{prop}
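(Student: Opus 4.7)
The plan is to prove part (2) first; part (1) then follows quickly. Once we have the identity $\BC_G(\mathcal{H})\cap U=\bigcap_{\alpha\in\mathcal{H}}\alpha(U)$, the right-hand side is an intersection of compact (hence closed) sets, so $\BC_G(\mathcal{H})\cap U$ is closed in $U$ and therefore in $G$. The standard fact that a subgroup of a topological group is closed as soon as its intersection with some compact open subgroup is closed then delivers (1): writing $C=\BC_G(\mathcal{H})$, the set $CU$ is a union of left cosets of $U$ and so is clopen in $G$, and within each such coset one has $C\cap hU=h(C\cap U)$, which is closed by left translation.

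For the easy inclusion of (2), set $V\defeq\bigcap_{\alpha\in\mathcal{H}}\alpha(U)$. Because $\mathcal{H}$ is a group, the reindexing $\gamma=\beta\alpha$ shows $\beta(V)=V$ for every $\beta\in\mathcal{H}$, so $V$ is $\mathcal{H}$-invariant. Taking $\alpha=\mathrm{id}$ in the defining intersection also gives $V\subseteq U$. Hence for any $v\in V$, the orbit $\mathcal{H}(v)$ is contained in the compact set $U$, which shows $v\in\BC_G(\mathcal{H})$.

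The substantive inclusion is the reverse one, $U\cap\BC_G(\mathcal{H})\subseteq V$. I would reduce it to the cyclic case. For each $\alpha\in\mathcal{H}$ one has $\langle\alpha\rangle\leq\mathcal{H}$, so any $g\in U\cap\BC_G(\mathcal{H})$ also lies in $U\cap\BC_G(\alpha)$. The cyclic version of the proposition — essentially the content of Willis' proof of Proposition~\ref{prop:ccontraction}(1) in \cite{Wi94} — asserts that for $U$ tidy for a single automorphism $\alpha$,
\[
U\cap\BC_G(\alpha)=\bigcap_{n\in\Zb}\alpha^n(U).
\]
The mechanism is the tidy decomposition $U=U_+U_-$: relative compactness of the two-sided orbit forces both the forward and backward components of $g$ into $U_+\cap U_-=\bigcap_{n\in\Zb}\alpha^n(U)$. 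Applying this cyclic identity to each $\alpha\in\mathcal{H}$ puts $g$ in $\alpha(U)$, and intersecting over $\mathcal{H}$ places $g$ in $V$.

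The main obstacle is thus the single-automorphism statement, which is why the authors invoke Willis rather than reprove it. Everything else — the reindexing that gives $\mathcal{H}$-invariance of $V$, the reduction from flat $\mathcal{H}$ to individual $\alpha\in\mathcal{H}$, and the clopen-cover manoeuvre used to derive (1) from (2) — is routine once that cyclic ingredient is accepted.
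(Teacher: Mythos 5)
Your argument is essentially the paper's own: the paper likewise reduces (1) to (2) via the standard compact-open-subgroup argument, notes the inclusion $\bigcap_{\alpha\in\mathcal{H}}\alpha(U)\subseteq U\cap\BC_G(\mathcal{H})$ is clear, and handles the substantive inclusion by invoking Willis' single-automorphism result (\cite[Lemma 9]{Wi94}) applied to each $\alpha\in\mathcal{H}$, exactly as in your reduction to the cyclic case. Your write-up just makes the easy inclusion and the clopen-coset deduction of (1) more explicit; there is no gap.
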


\begin{proof}
Let $U$ be a compact open subgroup of $G$ which is tidy for $\mathcal{H}$. That $\BC _G(\mathcal{H})$ is closed follows from $U\cap \BC _G(\mathcal{H})$ closed. It therefore suffices to prove (2). Since $U$ is tidy for $\mathcal{H}$, \cite[Lemma 9]{Wi94} ensures that $U\cap \BC _G(\mathcal{H})\leq \bigcap _{\alpha \in \mathcal{H}}\alpha (U)$. The reverse containment is clear.
\end{proof}

We can characterize uniscalar flat groups via the BC-centralizer.

\begin{prop}\label{prop:uniscalar}
For $\mathcal{H}$ a flat group of automorphisms of a \tdlc group $G$,  $\mathcal{H}$ is uniscalar if and only if $\BC _G(\mathcal{H})$ is open in $G$.
\end{prop}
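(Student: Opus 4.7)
The plan is to exploit Proposition \ref{prop:flatclosed}(2) together with the characterization of uniscalar single automorphisms as those admitting an invariant compact open subgroup. Fix a compact open subgroup $U$ tidy for $\mathcal{H}$ (which exists by flatness) and work with $V \defeq U \cap \BC_G(\mathcal{H}) = \bigcap_{\alpha \in \mathcal{H}} \alpha(U)$.

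For the forward direction, assume $\mathcal{H}$ is uniscalar. Since $U$ is tidy for $\mathcal{H}$ it is tidy for each $\alpha \in \mathcal{H}$ and also for each $\alpha^{-1}$ (by the fact recalled before Proposition \ref{prop:BW_uniscalar}). Then $s(\alpha) = |U : U \cap \alpha^{-1}(U)| = 1$ gives $\alpha(U) \subseteq U$, while $s(\alpha^{-1}) = |U : U \cap \alpha(U)| = 1$ gives $U \subseteq \alpha(U)$; hence $\alpha(U) = U$. As this holds for every $\alpha \in \mathcal{H}$, the subgroup $U$ is $\mathcal{H}$-invariant. For $u \in U$, the orbit $\mathcal{H}(u) \subseteq U$ is therefore relatively compact, so $U \subseteq \BC_G(\mathcal{H})$ and $\BC_G(\mathcal{H})$ is open.

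For the backward direction, assume $\BC_G(\mathcal{H})$ is open. The set $V = U \cap \BC_G(\mathcal{H})$ is open (intersection of open sets), compact (it is the intersection of $U$ with the closed set $\BC_G(\mathcal{H})$, where closedness comes from Proposition \ref{prop:flatclosed}(1)), and a subgroup since by Proposition \ref{prop:flatclosed}(2) it equals $\bigcap_{\alpha \in \mathcal{H}} \alpha(U)$, an intersection of subgroups. A short computation using this description shows $V$ is $\mathcal{H}$-invariant:
\[
\alpha_0(V) = \bigcap_{\alpha \in \mathcal{H}} \alpha_0\alpha(U) = \bigcap_{\beta \in \alpha_0 \mathcal{H}} \beta(U) = \bigcap_{\beta \in \mathcal{H}} \beta(U) = V
\]
for every $\alpha_0 \in \mathcal{H}$. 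Thus $V$ is a compact open subgroup simultaneously invariant under every $\alpha \in \mathcal{H}$, so each such $\alpha$ has $s(\alpha) = 1 = s(\alpha^{-1})$ with $V$ witnessing both; that is, $\mathcal{H}$ is uniscalar.

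The only real input is Proposition \ref{prop:flatclosed}(2), which makes the $\mathcal{H}$-invariance of $V$ transparent; without that identification, building an $\mathcal{H}$-invariant compact open subgroup from the opening of $\BC_G(\mathcal{H})$ would be the sticking point. Given that proposition, both directions reduce to routine bookkeeping with invariant compact open subgroups, so I do not expect any further obstacle.
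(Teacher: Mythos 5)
Your proof is correct and follows essentially the same route as the paper: both directions hinge on the tidy subgroup $U$ and on Proposition~\ref{prop:flatclosed}, with the openness of $\BC_G(\mathcal{H})$ converted into an $\mathcal{H}$-invariant compact open subgroup via the identity $U\cap\BC_G(\mathcal{H})=\bigcap_{\alpha\in\mathcal{H}}\alpha(U)$. The paper's version of the converse merely passes through the individual $\BC_G(\alpha)$ rather than treating $\mathcal{H}$ all at once, which is an inessential difference; your explicit invariance computation for $V$ is a fine way to fill in that step.
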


\begin{proof}
Fix a tidy subgroup $U$ for $\mathcal{H}$. If $\mathcal{H}$ is uniscalar, then $\mathcal{H}$ normalizes $U$, $U\leq \BC _G(\mathcal{H})$, and $\BC _G(\mathcal{H})$ is open. Conversely, if $\BC _G(\mathcal{H})$ is open, then $\BC _G(\alpha )$ is open for every $\alpha \in \mathcal{H}$. From Proposition~\ref{prop:flatclosed}, it now follows that $s(\alpha ) = 1$ for every $\alpha \in \mathcal{H}$. That is to say, $\mathcal{H}$ is uniscalar.
\end{proof}

\section{Thin subgroups}
We begin by defining a special family of flat subgroups via which we will isolate the bounded conjugacy rank. These definitions can be made for groups of automorphisms, but we restrict to subgroups of the ambient group for clarity and brevity.

\begin{defn}
For $G$ a \tdlc group, a subgroup $A\leq G$ is called \textbf{thin} on $G$ if it is flat on $G$ and $\BC _G(A)$ is a finite index open subgroup of $G$. That this definition agrees with the definition given in the introduction follows from the next corollary.
\end{defn}

As an immediate consequence of Proposition \ref{prop:uniscalar} and the definition, thin groups of automorphisms are uniscalar.
\begin{cor}\label{cor:thin}
For $G$ a \tdlc group, if $A$ is a thin subgroup of $G$, then $A$ is uniscalar.
\end{cor}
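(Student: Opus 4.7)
The plan is that this corollary is essentially a one-line deduction from Proposition~\ref{prop:uniscalar} together with the definition of thin subgroup, so I would write a proof that simply unwinds these two ingredients.

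More precisely, I would argue as follows. Let $A$ be a thin subgroup of $G$. By definition of thinness, $A$ is flat on $G$ and $\BC_G(A)$ is a finite index open subgroup of $G$; in particular $\BC_G(A)$ is open in $G$. Since $A$ is flat, Proposition~\ref{prop:uniscalar} applies and tells us that $A$ is uniscalar if and only if $\BC_G(A)$ is open. Combining these two facts immediately gives that $A$ is uniscalar.

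There is no genuine obstacle here: the work has already been done in establishing Proposition~\ref{prop:uniscalar}. The only subtlety worth flagging in the write-up is that the definition of thinness bundles together (i) flatness of $A$ (which is what lets us apply Proposition~\ref{prop:uniscalar} to $A$ as a subgroup of $\Aut(G)$ via inner automorphisms) and (ii) the openness of $\BC_G(A)$ (which is strictly stronger than what Proposition~\ref{prop:uniscalar} requires); the finite index condition plays no role in this particular corollary and can be discarded. So the entire proof is essentially a citation to Proposition~\ref{prop:uniscalar}, and I would keep it to two sentences.
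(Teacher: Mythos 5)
Your proof is correct and is exactly the paper's argument: the paper presents the corollary as an immediate consequence of Proposition~\ref{prop:uniscalar} and the definition of thinness, using only that $\BC_G(A)$ is open and that $A$ is flat. Your remark that the finite index condition is not needed here is also accurate.
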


Thin subgroups form a robust subfamily of the flat subgroups.

\begin{lem}\label{lem:flatcovol}
For $A$ a flat subgroup of sigma compact \tdlc group $G$, the following are equivalent:
\begin{enumerate}
\item $\BC _G (A)$ is of finite covolume in $G$,
\item $\BC _G (A)$ is cocompact in $G$,
\item $\BC _G (A)$ is finite index in $G$,
\item $A$ is thin on $G$.
\end{enumerate}
\end{lem}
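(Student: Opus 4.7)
Since $A$ is flat, $\BC_G(A)$ is closed in $G$ by Proposition~\ref{prop:flatclosed}. The equivalence (3) $\Leftrightarrow$ (4) is then routine: a closed subgroup of finite index in any topological group is automatically open, being the complement of finitely many closed cosets. The implications (3) $\Rightarrow$ (1) and (3) $\Rightarrow$ (2) are immediate, since an open finite-index subgroup yields a finite discrete quotient that is both compact and carries a finite invariant counting measure.

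The substantive directions are (1) $\Rightarrow$ (3) and (2) $\Rightarrow$ (3). The plan is to show in both cases that $A$ must be uniscalar: Proposition~\ref{prop:uniscalar} then gives that $\BC_G(A)$ is open, and an open subgroup of a locally compact group with either compact quotient or finite invariant quotient-measure is of finite index. Arguing by contradiction, suppose some $a\in A$ is not uniscalar; without loss of generality $s(a)>1$, so by Proposition~\ref{prop:BW_uniscalar} the closed subgroup $\ol{\con}(a^{-1})$ is non-compact. Since $\BC_G(A)\leq \BC_G(a)$, the hypothesis passes to $\BC_G(a)$ via Theorem~\ref{thm:covolume}(2).

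The geometric heart is to exhibit a non-compact closed subspace inside $G/\BC_G(a)$. By Proposition~\ref{prop:ccontraction}, $P_G(a^{-1})=\ol{\con}(a^{-1})\cdot \BC_G(a)$ with $\ol{\con}(a^{-1})\cap \BC_G(a)=\nub(a^{-1})$, so $\ol{\con}(a^{-1})$ acts continuously and transitively on the closed subset $P_G(a^{-1})/\BC_G(a)\subseteq G/\BC_G(a)$ with point stabilizer $\nub(a^{-1})$. Using $\sigma$-compactness of $G$ (and hence of $\ol{\con}(a^{-1})$) together with the standard open mapping principle for continuous transitive actions of $\sigma$-compact locally compact groups on locally compact Hausdorff spaces, this yields a homeomorphism
\[
\ol{\con}(a^{-1})/\nub(a^{-1}) \xrightarrow{\cong} P_G(a^{-1})/\BC_G(a),
\]
whose source is a non-compact locally compact group (since $\nub(a^{-1})$ is a compact normal subgroup of $\ol{\con}(a^{-1})$ by Baumgartner--Willis theory). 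For case (2), the compact space $G/\BC_G(a)$ cannot contain such a non-compact closed subset, a contradiction. For case (1), I disintegrate the $G$-invariant probability on $G/\BC_G(a)$ over its pushforward to $G/P_G(a^{-1})$; for almost every $y$, the conditional probability on the fiber over $y$ is invariant under $\Stab_G(y)$. Each such fiber is equivariantly homeomorphic to the non-compact locally compact homogeneous space $\ol{\con}(a^{-1})/\nub(a^{-1})$, and by uniqueness of invariant Radon measures on homogeneous spaces, any invariant measure there is a scalar multiple of an infinite Haar-type measure, and hence cannot be a probability. This contradiction forces $A$ to be uniscalar.

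The main obstacle I expect is the homeomorphism identification $\ol{\con}(a^{-1})/\nub(a^{-1})\cong P_G(a^{-1})/\BC_G(a)$, which depends on the $\sigma$-compactness hypothesis and on $\nub(a^{-1})$ being a compact normal subgroup of $\ol{\con}(a^{-1})$. The measure-theoretic step for case (1) additionally requires careful use of disintegration and the uniqueness of invariant measures on locally compact homogeneous spaces, whereas case (2) is handled directly by the non-compactness of the closed subspace.
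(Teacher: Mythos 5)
Your overall strategy is the paper's: dispose of (3)$\Leftrightarrow$(4) and (3)$\Rightarrow$(1),(2) via closedness of $\BC_G(A)$, and reduce the substantive implications to showing $A$ is uniscalar, using $\mathrm{P}_G(a^{-1})=\ol{\con}(a^{-1})\BC_G(a)$ and $\ol{\con}(a^{-1})\cap\BC_G(a)=\nub(a^{-1})$, then finishing with Proposition~\ref{prop:uniscalar}. Your cocompact case (2) is sound: the identification $\ol{\con}(a^{-1})/\nub(a^{-1})\cong \mathrm{P}_G(a^{-1})/\BC_G(a)$ via the open mapping principle for $\sigma$-compact groups is exactly the homeomorphism the paper establishes inside Proposition~\ref{prop:KHcovol}, and a non-compact closed subspace of a compact space is indeed a contradiction. (Your aside that $\nub(a^{-1})$ is normal in $\ol{\con}(a^{-1})$ is not justified, but it is also not needed: compactness of $\nub(a^{-1})$ plus non-compactness of $\ol{\con}(a^{-1})$ already forces the coset space to be non-compact, and Weil's theory of invariant measures applies to coset spaces.)

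The genuine gap is in case (1). You disintegrate the $G$-invariant probability on $G/\BC_G(a)$ over $G/\mathrm{P}_G(a^{-1})$ and assert that for almost every $y$ the conditional measure on the fiber is $\Stab_G(y)$-invariant. Two problems: first, disintegration theorems need countability/regularity hypotheses (standard Borel or second countable settings); the lemma only assumes $G$ is $\sigma$-compact, which does not give second countability, so the existence of a measurable disintegration is not automatic. Second, even granting a disintegration, the standard equivariance statement gives, for each fixed $g$, that $g_*\mu_y=\mu_{gy}$ for a.e.\ $y$; upgrading this to "for a.e.\ $y$, $\mu_y$ is invariant under the entire (uncountable) group $\Stab_G(y)$" requires a separability or Fubini argument you do not supply. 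None of this machinery is needed: since $\BC_G(A)\leq\BC_G(a)\leq \mathrm{P}_G(a^{-1})\leq G$, Theorem~\ref{thm:covolume}(2) — which you already invoke — directly gives that $\BC_G(a)$ has finite covolume in $\mathrm{P}_G(a^{-1})$; pushing that single $\mathrm{P}_G(a^{-1})$-invariant probability through your equivariant homeomorphism makes $\nub(a^{-1})$ a compact finite-covolume subgroup of $\ol{\con}(a^{-1})$, forcing $\ol{\con}(a^{-1})$ compact, the desired contradiction. This is precisely how the paper proceeds, packaged as Proposition~\ref{prop:KHcovol} applied inside $\mathrm{P}_G(a)$; replacing your disintegration step by this restriction argument repairs the proof.
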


\begin{proof}
By Proposition \ref{prop:flatclosed}, $\BC _G(A)$ is closed, so the equivalence of (3) and (4) is immediate. The implications (4)$\Rightarrow$(1) and (4)$\Rightarrow$(2) are also clear.

For (1)$\Rightarrow$(4), suppose that $\BC _G (A)$ is of finite covolume in $G$. For any $a\in A$, Proposition \ref{prop:ccontraction} ensures that $\ol{\con}(a ) \BC _G(a ) = \mathrm{P}_G(a)$ and that $\ol{\con}(a ) \cap \BC _G (a )=\nub (a)$. The subgroup $\nub(a)$ is compact, and since $\BC _G(A)\leq \BC _G(a )$, the group $\BC _G(a )$ is also of finite covolume in $\mathrm{P}_G(a )$. Proposition \ref{prop:KHcovol} thus implies that $\ol{\con}(a )$ is compact. Similarly, $\ol{\con}(a^{-1})$ is compact. Appealing to Proposition \ref{prop:BW_uniscalar}, we deduce that $s(a ) = s(a ^{-1})=1$. The group $A$ is thus flat and uniscalar, hence $A$ normalizes some compact open subgroup $U$ of $G$. The subgroup $U$ is contained in $\BC _G(A)$, so $\BC _G (A)$ is open and of finite covolume in $G$. We infer that $\BC_G(A)$ is of finite index in $G$.

The same argument gives the implication (2)$\Rightarrow$(4).
\end{proof}

The property of being a thin subgroup also enjoys a hereditary property. The proof is immediate from the definitions.
\begin{lem}\label{lem:hereditary}
Let $G$ be a \tdlc group with $H$ and $A$ closed subgroups of $G$. If  $A$ is thin on $G$ and $A\leq H$, then $A$ is thin on $H$. In particular, if $A\leq G$ is a closed subgroup that is thin on $G$, then $A$ is thin on itself.
\end{lem}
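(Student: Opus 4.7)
The plan is to unpack the definition of thin on $G$ to produce a compact open subgroup of $H$ that witnesses thinness on $H$, and then show that the BC-centralizer behaves well with respect to intersection with $H$. Because $A \leq H$, conjugation of $H$ by $A$ is simply the restriction of conjugation of $G$ by $A$, which means everything will transfer to $H$ essentially formally.

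First, since $A$ is thin on $G$, Corollary~\ref{cor:thin} gives that $A$ is uniscalar on $G$, so $A$ normalizes some compact open subgroup $U \leq G$. Set $V \defeq U \cap H$. Since $H$ is closed in $G$, the subgroup $V$ is a compact open subgroup of $H$, and since $A \leq H$ normalizes $U$ and $V \leq H$, the subgroup $V$ is $A$-invariant. Thus every inner automorphism of $H$ induced by an element of $A$ preserves $V$, so $V$ is simultaneously tidy for $A$ as a group of inner automorphisms of $H$, with $s(a) = s(a^{-1}) = 1$ for each $a \in A$. This shows $A$ is flat (and in fact uniscalar) on $H$.

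Next, observe that $\BC_H(A) = \BC_G(A) \cap H$: for $h \in H$, the conjugacy set $h^A$ is the same whether computed in $H$ or $G$, and because $H$ is closed in $G$, relative compactness of a subset of $H$ in $H$ is the same as relative compactness in $G$. Since $\BC_G(A)$ is open in $G$ by assumption, $\BC_H(A)$ is open in $H$. For the index, the inclusion $H \hookrightarrow G$ induces an injection $H/(\BC_G(A)\cap H) \hookrightarrow G/\BC_G(A)$ of coset spaces, so $[H : \BC_H(A)] \leq [G : \BC_G(A)] < \infty$. Combined with flatness on $H$, this shows $A$ is thin on $H$.

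The ``in particular'' statement follows by taking $H = A$, which is closed and contains $A$. There is no real obstacle here: every step is a direct verification from the definitions, with the only mildly nontrivial ingredient being the appeal to Corollary~\ref{cor:thin} to produce the $A$-invariant compact open $U$ in $G$ so that $U \cap H$ can serve as the tidy subgroup for $A$ on $H$.
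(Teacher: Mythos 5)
Your proof is correct and matches the paper, which simply declares the lemma ``immediate from the definitions''; your writeup is exactly the expected verification (uniscalarity via Corollary~\ref{cor:thin} giving an $A$-invariant compact open $U$ with $U\cap H$ compact open in $H$, plus $\BC_H(A)=\BC_G(A)\cap H$ because $H$ is closed).
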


Thin subgroups finally have a well-understood internal structure.

\begin{lem}\label{lem:KL}
Let $A$ be a compactly generated closed subgroup of a \tdlc group $G$. If $A$ is thin on $G$, then $\RLE(A)$ is compact and relatively open in $A$, and $A/\RLE(A)$ has a finite index free abelian subgroup.
\end{lem}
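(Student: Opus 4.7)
The plan is to reduce to a compactly generated $\cFC$ finite-index subgroup and apply Usakov's theorem (Theorem \ref{thm:usakov}), then transfer the conclusion from that subgroup back to $A$ through the regionally compact radical.

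First I would establish relative openness of $\RLE(A)$. Since $A$ is thin on $G$, Corollary \ref{cor:thin} says $A$ is uniscalar, so $A$ normalizes some compact open subgroup $U$ of $G$. Setting $V \defeq U \cap A$, we obtain a compact open subgroup of $A$ that is normalized by $A$. Being a compact normal subgroup of $A$, it is contained in $\RLE(A)$, so $\RLE(A)$ is open in $A$. Next set $H \defeq \BC_G(A) \cap A$; thinness of $A$ makes $\BC_G(A)$ a finite-index open subgroup of $G$, so $H$ is a finite-index open subgroup of $A$. For each $h \in H$ the set $h^H \subseteq h^A$ is relatively compact, so $H$ is $\cFC$.

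To get a normal situation, let $H'$ be the normal core of $H$ in $A$: the intersection of the (finitely many) $A$-conjugates of $H$. Then $H'$ is a finite-index open normal subgroup of $A$, is still $\cFC$ (as a subgroup of $H$), and is compactly generated as a finite-index subgroup of the compactly generated group $A$. Theorem \ref{thm:usakov} now applies, yielding that $\RLE(H')$ is compact and $H'/\RLE(H')$ is free abelian. Since $\RLE(H')$ is characteristic in $H'$ and $H' \trianglelefteq A$, we get $\RLE(H') \trianglelefteq A$, so $\RLE(H') \leq \RLE(A)$; conversely $\RLE(A) \cap H'$ is a regionally compact normal subgroup of $H'$, giving $\RLE(A) \cap H' \leq \RLE(H')$. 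Hence $\RLE(A) \cap H' = \RLE(H')$.

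From here the conclusion follows by bookkeeping. Since $H'$ is open of finite index in $A$, the subgroup $\RLE(A) \cap H'$ is open of finite index in $\RLE(A)$; as $\RLE(A) \cap H' = \RLE(H')$ is compact and $\RLE(A)$ is a finite union of its cosets, $\RLE(A)$ itself is compact. For the quotient, the standard second-isomorphism identification
\[
H'\RLE(A)/\RLE(A) \;\cong\; H'/(H'\cap \RLE(A)) \;=\; H'/\RLE(H')
\]
exhibits a free abelian subgroup of $A/\RLE(A)$ whose index divides $[A:H']$ and is therefore finite.

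I do not expect a genuine obstacle: the only subtlety is that $H = \BC_G(A) \cap A$ need not be normal in $A$, which forces the passage to the normal core $H'$ before invoking Usakov. Everything else is a routine comparison of regionally compact radicals using normality and the finite-index relationship between $H'$ and $A$.
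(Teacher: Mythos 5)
Your proof is correct and follows essentially the same route as the paper: pass to the finite-index open $\cFC$ subgroup $\BC_G(A)\cap A$, apply Usakov's theorem, and compare regionally compact radicals. The only cosmetic difference is your passage to the normal core, which is unnecessary: $\BC_G(A)$ is already normalized by $A$ (conjugating $g$ by $a\in A$ does not change the set $g^A$), so $H=\BC_G(A)\cap A$ is normal in $A$ to begin with.
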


\begin{proof}
 Consider $B\defeq \BC_A(A)=\BC_G(A)\cap A$. The group $B$, being a finite index relatively open subgroup of $A$, is compactly generated, and $B$ is an ${\ol{\mathrm{FC}}}$ group. Applying Theorem~\ref{thm:usakov}, $\RLE(B)$ is a compact open normal subgroup of $B$, and $B/\RLE(B)$ is isomorphic to $\Zb ^n$ for some $0\leq n<\infty$.  Since $\RLE(B)$ is topologically characteristic in $B$, we deduce that $A/\RLE(B)$ is virtually free abelian.

It remains to show that $\RLE(B)=\RLE(A)$.  Clearly, $\RLE(B)\leq \RLE(A)$. On other hand, $\RLE(A)\cap B=\RLE(B)$, so $|\RLE(A):\RLE(B)|<\infty$. The subgroup $\RLE(A)$ is then compact, and therefore, $\RLE(A)\leq B$. Hence, $\RLE(A)=\RLE(B)$.
\end{proof}

We conclude by isolating the thin subgroups among the compactly generated closed subgroups.

\begin{prop}\label{prop:thin_char}
Suppose that $A$ is a compactly generated closed subgroup of a \tdlc group $G$. Then $A$ is thin if and only if $\BC _G(A)$ is of finite index and open in $G$.
\end{prop}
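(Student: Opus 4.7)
The forward direction is immediate from the definition of thin, so all the content lies in the converse. Assume $\BC _G(A)$ is of finite index and open in $G$. A direct computation shows that $A$ normalizes $\BC _G(A)$: for $a\in A$ and $g\in\BC_G(A)$, as $h$ ranges over $A$, so does $ha$, giving $(aga^{-1})^A=g^A$. Hence $B\defeq A\cap\BC _G(A)$ is a finite index open normal subgroup of $A$; it is closed as the intersection of $A$ with the open-hence-closed $\BC _G(A)$, and it is compactly generated as a finite index closed subgroup of the compactly generated group $A$. Since $B\leq \BC _G(A)\leq \BC _G(B)$, the group $B$ is $\cFC$, so by Usakov's theorem (Theorem~\ref{thm:usakov}), $\RLE(B)$ is compact and $B/\RLE(B)$ is free abelian. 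Since the quotient is a free abelian \tdlc group, it must be discrete, forcing $\RLE(B)$ to be open in $B$. It now suffices to show $A$ is flat on $G$, as the hypothesis on $\BC _G(A)$ will then give that $A$ is thin by definition.

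To show $B$ is flat on $G$, the plan is to apply the Shalom--Willis theorem (Theorem~\ref{thm:common_tidy_sgrp}(1)) with $\mathcal{H}\defeq B$ and $\mathcal{N}\defeq \RLE(B)$, both viewed in $\Aut(G)$ via inner automorphisms. The quotient $B/\RLE(B)$ is finitely generated abelian, in particular finitely generated nilpotent, so the only hypothesis to verify is the existence of a compact open subgroup of $G$ normalized by $\RLE(B)$. This is a standard fact about \tdlc groups: given any compact open $U\leq G$, continuity of $(k,w)\mapsto kwk^{-1}$ combined with compactness of $\RLE(B)$ lets one finitely cover $\RLE(B)$ to find an open identity neighborhood $W$ with $W\subseteq kUk^{-1}$ for every $k\in\RLE(B)$, whence $V\defeq \bigcap_{k\in \RLE(B)}kUk^{-1}$ is a compact open subgroup normalized by $\RLE(B)$. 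Shalom--Willis then yields a compact open subgroup $V'\leq G$ tidy for every element of $B$, so $B$ is flat on $G$.

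Finally, I promote flatness of $B$ to flatness of $A$. Choose coset representatives $a_1,\ldots,a_k$ for $B$ in $A$ and set $V''\defeq \bigcap_{i=1}^k a_iV'a_i^{-1}$, which is compact open as a finite intersection. Using that $B\triangleleft A$ and each $b\in B$ normalizes $V'$, a short calculation shows that for any $a=a_ib\in A$ one has $aV''a^{-1}=V''$; hence $V''$ is a common tidy subgroup for $A$, so $A$ is flat. Combined with the hypothesis that $\BC _G(A)$ is of finite index and open in $G$, this gives that $A$ is thin by definition. The main conceptual step is the application of Shalom--Willis to the $\cFC$ group $B$; constructing an $\RLE(B)$-invariant compact open subgroup and bootstrapping from $B$ to $A$ are essentially mechanical.
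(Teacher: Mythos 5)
Your route is essentially the paper's: pass to $B\defeq A\cap\BC_G(A)$, apply Usakov's theorem to the compactly generated $\cFC$ group $B$, feed the resulting free abelian quotient into Shalom--Willis to get flatness of $B$, and then bootstrap to $A$ by intersecting finitely many conjugates. There is, however, a genuine gap in the bootstrapping step. Shalom--Willis hands you a compact open subgroup $V'$ that is \emph{tidy} for every element of $B$, and you then assert that each $b\in B$ \emph{normalizes} $V'$. Tidiness does not imply invariance: for a non-uniscalar element $b$, no compact open subgroup is normalized by $b$ at all, and a tidy subgroup satisfies $|V':V'\cap bV'b^{-1}|=s(b^{-1})$, which can exceed $1$. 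So, as written, the computation $aV''a^{-1}=V''$ has no justification and the argument does not close.

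The missing ingredient is exactly Proposition~\ref{prop:uniscalar}, which is what the paper invokes at this point: since $\BC_G(B)\supseteq\BC_G(A)$ is open, the flat group $B$ is uniscalar, and then $s(b)=s(b^{-1})=1$ forces $|V':V'\cap b^{\pm 1}V'b^{\mp 1}|=1$ for the tidy subgroup $V'$, i.e.\ $bV'b^{-1}=V'$ for every $b\in B$. With that one citation inserted, the rest of your argument --- normality of $B$ in $A$, finitely many cosets, $V''=\bigcap_i a_iV'a_i^{-1}$ normalized by all of $A$, hence $A$ uniscalar and in particular flat, hence thin --- is correct and coincides with the paper's proof. (A minor stylistic point: rather than arguing that a free abelian \tdlc group must be discrete, you can simply note that the proof of Theorem~\ref{thm:usakov} shows $\RLE(B)$ is open outright, which is what you need to conclude that $B/\RLE(B)$ is finitely generated before applying Shalom--Willis.)
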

\begin{proof}
The non-trivial implication is the reverse, so let us assume that $\BC _G(A)$ is of finite index and open in $G$. We argue that $A$ is flat.

As in the proof of Lemma~\ref{lem:KL}, $A/\RLE(A)$ is virtually free abelian, and $\RLE(A)$ is compact. We may thus find $B\normal A$ of finite index such that $\RLE(A)\leq B$ and $B/\RLE(A)$ is free abelian. Applying Theorem \ref{thm:common_tidy_sgrp}, we conclude that $B$ is flat, and it follows from Proposition \ref{prop:uniscalar} that $B$ is uniscalar. Let $U$ be a compact open subgroup normalized by $B$. Since $B$ is of finite index in $A$, the group $U$ has only finitely many conjugates under $A$, and the intersection of all of these conjugates is a compact open subgroup of $G$ which is normalized by $A$. The subgroup $A$ is thus uniscalar, so in particular, it is flat.
\end{proof}

\section{Bounded conjugacy rank}\label{sec:BCR}

\subsection{Lattice invariant}
We are now prepared to prove Theorem~\ref{thm:rkcovol_intro}. In view of Lemma~\ref{lem:KL}, the next definition is sensible. Recall that the rank of a free abelian group is the minimum number of generators.
\begin{defn}
For $A$ a compactly generated \tdlc group that is thin on itself, the {\bf thin rank} of $A$, denoted by $\rk _t(A)$, is the maximum of the ranks of free abelian subgroups of $A/\RLE(A)$.
\end{defn}
Via the thin rank, we define the desired rank.
\begin{defn}
For $G$ a \tdlc group, the {\bf bounded conjugacy rank} of $G$, denoted by $\rk_{BC}(G)$, is the supremum of $\rk _t(A)$ as $A$ ranges over all compactly generated closed subgroups that are thin on $G$.
\end{defn}
Lemma~\ref{lem:hereditary} ensures that each $A\leq G$ that is thin on $G$ is also thin on itself, so the bounded conjugacy rank is well-defined. Note further that the bounded conjugacy rank can be infinite; see Example~\ref{ex:M-groups}.

The proof of the desired theorem requires a technical lemma.

\begin{lem}\label{lem:sgrp_thin_grp}
For $A$ a compactly generated \tdlc group that is thin on itself and $B$ a closed subgroup of $A$, the following are equivalent:
\begin{enumerate}
\item $B$ is cocompact in $A$.
\item $B$ is of finite covolume in $A$.
\item $B\RLE(A)$ is of finite index and open in $A$.
\item $\rk _t (B) = \rk _t (A)$.
\end{enumerate}
\end{lem}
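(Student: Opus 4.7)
The structural input from Lemma~\ref{lem:KL} applied to $A$ is that $R := \RLE(A)$ is compact and open in $A$, and that the discrete quotient $A/R$ is virtually free abelian of rank $n := \rk _t(A)$. For any closed $B \leq A$, the intersection $B \cap R$ is compact, open and normal in $B$ (normal because $R$ is normal in $A$), and $B/(B \cap R)$ is algebraically isomorphic to $BR/R$, viewed as a subgroup of $A/R$. Crucially, since $R$ is open and normal in $A$, the set $BR$ is an open subgroup of $A$ and $A/BR$ is discrete, which collapses compactness, finite invariant measure, and finiteness for $A/BR$ into a single condition.

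My plan for (1) $\Leftrightarrow$ (2) $\Leftrightarrow$ (3): (1) $\Rightarrow$ (3) follows by upgrading cocompactness of $B$ to cocompactness of $BR \supseteq B$ and observing that a discrete compact space is finite. For (2) $\Rightarrow$ (3), push the invariant probability on $A/B$ forward along the equivariant continuous surjection $A/B \twoheadrightarrow A/BR$; the resulting $A$-invariant probability measure on the transitive discrete $A$-set $A/BR$ forces that set to be finite. For (3) $\Rightarrow$ (1), write $A = \bigsqcup_{i=1}^k a_i (BR)$ with finitely many cosets and realise $B \backslash A$ as the finite union of the images of the compact sets $R a_i$. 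Upgrading (1) to (2) under hypothesis (3) is then an application of Theorem~\ref{thm:covolume}(1), provided $B$ is unimodular; this I verify by noting that $B$ is an extension of the discrete (hence unimodular) group $B/(B\cap R)$ by the compact normal subgroup $B\cap R$, and conjugation preserves Haar measure on any compact group, so $\Delta_B \equiv 1$.

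For (3) $\Leftrightarrow$ (4), the bridge is the isomorphism $B/(B\cap R) \cong BR/R$. Assuming (3), $BR/R$ is a finite index subgroup of the finitely generated discrete group $A/R$, hence itself finitely generated, which makes $B$ compactly generated (as an extension of a finitely generated discrete group by a compact group). Moreover $\BC _A(A) \leq \BC _A(B)$, so $\BC _B(B) = B \cap \BC _A(B)$ is finite index and open in $B$, and Proposition~\ref{prop:thin_char} shows that $B$ is thin on itself, making $\rk _t(B)$ well-defined. Lemma~\ref{lem:KL} applied to $B$ yields $\RLE(B)$ compact open in $B$ with $B/\RLE(B)$ virtually free abelian; since $B \cap R \leq \RLE(B)$ and $\RLE(B)/(B \cap R)$ is a compact subgroup of the discrete group $B/(B \cap R)$, hence finite, the quotients $B/\RLE(B)$ and $B/(B \cap R)$ share the same rank, which equals $n$ because $BR/R$ has finite index in $A/R$. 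Conversely, assuming (4), the same analysis applied to $B$ identifies $BR/R$ as a virtually $\Zb^n$ subgroup of the virtually $\Zb^n$ group $A/R$, forcing $BR/R$ to have finite index in $A/R$ and yielding (3).

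The main obstacle I anticipate is the bookkeeping in (3) $\Rightarrow$ (4): one must first produce compact generation and thinness on itself for $B$ (via Proposition~\ref{prop:thin_char}) so that $\rk _t(B)$ is even defined, and then reconcile the two a priori different compact open normal subgroups $B\cap R$ and $\RLE(B)$ of $B$ to see that they yield the same rank. The unimodularity check in (3) $\Rightarrow$ (2) is the other delicate technical point but is handled directly by the compact-open-normal structure through $B\cap R$.
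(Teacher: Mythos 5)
Your proof is correct and follows essentially the same route as the paper: reduce everything via Lemma~\ref{lem:KL} to the discrete, virtually free abelian quotient $A/\RLE(A)$ with $\RLE(A)$ compact open, handle (1)--(3) through openness and finite index of $B\RLE(A)$, and settle (3)$\Leftrightarrow$(4) by comparing maximal free abelian ranks in virtually $\Zb^n$ groups. The only divergence is bookkeeping in (3)$\Rightarrow$(4) -- the paper identifies $\RLE(B)=B\cap\RLE(A)$ and gets $B/\RLE(B)\cong C/\RLE(C)$ for $C=B\RLE(A)$, whereas you invoke Proposition~\ref{prop:thin_char} and a finite-kernel rank comparison, which has the mild advantage of making explicit why $\rk_t(B)$ is well-defined under (3).
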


\begin{proof}
The equivalence of (1) and (3) and (2)$\Rightarrow$(3) are immediate. For (3)$\Rightarrow $(2), it is immediate that $B$ is cocompact. Lemma~\ref{lem:KL} ensures that $A$ and $B$ have a compact open normal subgroup, and thus, they are both unimodular. Lemma~\ref{thm:covolume} then implies that $B$ is of finite covolume.

(3)$\Rightarrow$(4). Suppose that $C\defeq B\RLE(A)$ is of finite index and open in $A$. One verifies that $\RLE(C)=\RLE(A)$, so $C/\RLE(C)$ is isomorphic to a finite index subgroup of $A/\RLE(A)$. We conclude that $\rk _t (C) = \rk _t (A)$. On the other hand, the projection $\pi:B\rightarrow C/\RLE(C)$ is onto with kernel $\RLE(C)\cap B$, and $\pi(\RLE(B))$ lies in the regionally compact radical of $C$. We deduce that $\RLE(C)\cap B=\RLE(B)$. The quotient $B/\RLE(B)$ is thus isomorphic to $C/\RLE(C)$, so $\rk_t(B)=\rk_t(C)$. Claim (4) is now demonstrated.

(4)$\Rightarrow$(3). The group $B/\RLE(A)\cap B$ is isomorphic to the subgroup $B\RLE(A)/\RLE(A)$ of $A/\RLE(A)$. The group $B/\RLE(A)\cap B$ is thus virtually free abelian, and the maximum rank of free abelian subgroups is less than or equal to $\rk_t(A)$. On the other hand, $B/\RLE(A)\cap B$  maps onto $B/\RLE(B)$, so the maximum rank of free abelian subgroups of $B/\RLE(A)\cap B$ is greater than or equal to the maximum rank of free abelian subgroups of $B/\RLE(B)$. Since $\rk_t(B)=\rk_t(A)$, it now follows that the maximum rank of free abelian subgroups of $B/\RLE(A)\cap B$ equals $\rk_t(A)$. We conclude that $B\RLE(A)/\RLE(A)$ has the same maximum rank of free abelian subgroups as $A/\RLE(A)$, so $B\RLE(A)/\RLE(A)$ has finite index in $A$, verifying (3).
\end{proof}

\begin{thm}\label{thm:rkcovol}
Suppose that $G$ is a sigma compact \tdlc group and $H$ is a closed subgroup of $G$.
\begin{enumerate}
\item If $H$ has finite covolume in $G$, then $\rk _{BC}(H)=\rk _{BC}(G)$.
\item If $H$ is cocompact in $G$, then $\rk _{BC}(H)=\rk _{BC}(G)$.
\end{enumerate}
In particular, if $\Gamma$ and $\Delta$ are any two lattices in $G$, then
\[
\rk _{BC}(\Gamma ) = \rk _{BC}(G)=\rk _{BC}(\Delta ) .
\]
\end{thm}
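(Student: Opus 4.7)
I plan to establish $\rk_{BC}(H)\leq \rk_{BC}(G)$ and $\rk_{BC}(G)\leq \rk_{BC}(H)$ separately, handling the finite covolume and cocompact cases in parallel; the concluding lattice statement is then immediate.

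For $\rk_{BC}(H)\leq \rk_{BC}(G)$, start with a compactly generated closed $B\leq H$ thin on $H$. Lemma~\ref{lem:KL} gives $\RLE(B)$ compact with $B/\RLE(B)$ virtually $\Zb^n$, where $n=\rk_t(B)$. Since any compact subgroup of a \tdlc group normalizes some compact open subgroup of the ambient group, $\RLE(B)$ normalizes a compact open subgroup of $G$, and Theorem~\ref{thm:common_tidy_sgrp}(2) produces a finite index subgroup $B_1\leq B$ that is flat on $G$; Lemma~\ref{lem:sgrp_thin_grp} yields $\rk_t(B_1)=\rk_t(B)$. The closedness of $H$ in $G$ forces $\BC_H(B_1)=H\cap \BC_G(B_1)$, and since $\BC_H(B_1)$ is finite index open in $H$, transitivity of finite covolume (Theorem~\ref{thm:covolume}(2)) in case~(1) and of cocompactness in case~(2) makes $\BC_G(B_1)$ of finite covolume or cocompact in $G$. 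Lemma~\ref{lem:flatcovol} then upgrades $B_1$ to being thin on $G$, so $\rk_{BC}(G)\geq \rk_t(B_1)=\rk_t(B)$.

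For $\rk_{BC}(G)\leq \rk_{BC}(H)$, start with $A\leq G$ compactly generated closed thin on $G$ with $n=\rk_t(A)$. By Corollary~\ref{cor:thin}, $A$ normalizes a compact open subgroup $U$ of $G$; set $B\defeq AU$, which is open (hence closed) in $G$, compactly generated, with compact normal subgroup $U$ and $B/U\cong A/(A\cap U)$ virtually $\Zb^n$. A short computation using the identity $auhu^{-1}a^{-1}=(aua^{-1})(aha^{-1})(au^{-1}a^{-1})$ and $A$-normality of $U$ shows $\BC_G(B)=\BC_G(A)$, so $B$ is thin on $G$ with $\rk_t(B)=n$. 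Now set $K\defeq B\cap H$. Since $B$ is open in $G$, the map $B\hookrightarrow G\to G/H$ is open and continuous, so every $B$-orbit on $G/H$ is open, and the orbit map descends to a $B$-equivariant homeomorphism $B/K\cong BH/H$. In case~(2), compact $G/H$ is partitioned into open, hence clopen, hence compact $B$-orbits, so $BH/H$ is compact and $K$ is cocompact in $B$. In case~(1), the $G$-invariant probability measure $\mu$ on $G/H$ has full support (transitivity of $G$), so restricting $\mu$ to the open positive-measure set $BH/H$ and normalizing yields a $B$-invariant probability on $B/K$, making $K$ of finite covolume in $B$. In either case, Lemma~\ref{lem:sgrp_thin_grp} gives $\rk_t(K)=n$, and $K$ is compactly generated (as $K\RLE(B)$ is finite index open in $B$ and $K$ is cocompact in it). Finally, $\BC_G(A)$ is finite index open in $G$, so $H\cap \BC_G(A)$ is finite index open in $H$; since $\BC_H(K)\supseteq H\cap \BC_G(B)=H\cap \BC_G(A)$, Proposition~\ref{prop:thin_char} yields $K$ thin on $H$, whence $\rk_{BC}(H)\geq n$.

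The main obstacle is the second inequality, since $A$ need not meet $H$ in any subgroup of interesting rank; the open enlargement $B=AU$ is the decisive device, because openness of $B$ lets one transfer cocompactness or finite covolume from $G/H$ to $B/K$, and the identity $\BC_G(B)=\BC_G(A)$ ensures that $K=B\cap H$ still lies over the bounded-conjugacy part of $G$ and so inherits thinness on $H$. The rest is bookkeeping with Lemmas~\ref{lem:flatcovol}, \ref{lem:sgrp_thin_grp}, and~\ref{lem:hereditary} together with the identification $\BC_H(\cdot)=H\cap \BC_G(\cdot)$.
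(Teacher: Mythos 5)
Your proposal follows essentially the same route as the paper: for $\rk_{BC}(G)\leq\rk_{BC}(H)$ you enlarge a thin $A$ to the open subgroup $AU$ (via Corollary~\ref{cor:thin}) and intersect with $H$, using Lemma~\ref{lem:sgrp_thin_grp} to preserve the rank; for $\rk_{BC}(H)\leq\rk_{BC}(G)$ you pass to a finite index subgroup that Theorem~\ref{thm:common_tidy_sgrp} makes flat on $G$ and upgrade it to thin via Lemma~\ref{lem:flatcovol}. The extra details you supply (openness of the $AU$-orbits on $G/H$, full support of the invariant measure, the identity $\BC_G(AU)=\BC_G(A)$) are correct and merely flesh out what the paper asserts.

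One local slip: in the second inequality you invoke Theorem~\ref{thm:common_tidy_sgrp}(2) with $\mathcal{H}=B$ and $\mathcal{N}=\RLE(B)$, whose quotient Lemma~\ref{lem:KL} only guarantees to be finitely generated and virtually free abelian; such a group need not be polycyclic (e.g.\ $\Zb^n\times A_5$, which is thin on itself), so part~(2) as stated does not apply. The repair is exactly what the paper does: first pass to a finite index \emph{open} subgroup $B_0\normal B$ with $B_0/\RLE(B)$ free abelian and apply part~(1) (your observation that the compact group $\RLE(B)$ normalizes a compact open subgroup of $G$ supplies the hypothesis on $\mathcal{N}$); this also guarantees $B_0$ is closed, which you need before applying Lemma~\ref{lem:sgrp_thin_grp} and Lemma~\ref{lem:flatcovol}. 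With that one-sentence change your argument is complete and coincides with the paper's.
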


\begin{proof}
Suppose that $H$ is cocompact (of finite covolume) in $G$. Let $A$ be any compactly generated closed subgroup of $G$ which is thin on $G$. Applying Corollary~\ref{cor:thin}, we obtain $U$ a compact open  subgroup of $G$ which is normalized by $A$. The subgroup $AU$ is thin on $G$ and open. The intersection $B\defeq AU\cap H$ is then cocompact (of finite covolume) in $AU$, so $\rk _t (B)=\rk _t(A)$ by Lemma~\ref{lem:sgrp_thin_grp}. In addition, since $B$ is thin on $G$, it is thin on $H$. We conclude that $\rk _{BC}(H)\geq \rk _t(B)=\rk _t(A)$. Taking the supremum of the $\rk_t(A)$ as $A$ varies over compactly generated subgroups that are thin on $G$, we obtain $\rk _{BC}(H)\geq \rk _{BC}(G)$.

Conversely, let $B$ be any closed compactly generated subgroup of $H$ which is thin on $H$. Applying Lemma~\ref{lem:KL}, there is a finite index open $B_0\normal B$ such that $B_0/\RLE(B)$ is a finitely generated free abelian group. Theorem \ref{thm:common_tidy_sgrp} implies that $B_0$ is flat on $G$. Moreover, $\BC _G(B_0)$ contains $\BC _H(B_0)=\BC _H(B)$, so $\BC _G(B_0)$ is cocompact (of finite covolume) in $G$. We conclude that $B_0$ is thin on $G$ by Lemma \ref{lem:flatcovol}, so $\rk  _{BC}(G)\geq \rk _t(B_0)=\rk _t(B)$. Taking the supremum over all compactly generated closed $B\leq H$ thin on $H$ yields that $\rk _{BC}(G)\geq \rk _{BC}(H)$.
\end{proof}

Theorem~\ref{thm:rkcovol} gives a new lattice invariant for lattices in sigma compact \tdlc groups. Surprisingly, Example~\ref{ex:rkcovol} shows that one cannot naively extend Theorem~\ref{thm:rkcovol} to all sigma compact locally compact groups.

\subsection{Bounded conjugacy rank in discrete groups}
The $BC$-rank for discrete groups, so for lattices in particular, has a simpler characterization than for general \tdlc groups. The characterization becomes even more straightforward for finitely generated discrete groups.

For a discrete group $\Gamma$ and a subgroup $H$ of $\Gamma$, we define $\FC _{\Gamma}(H)$ to be the set of all elements $\gamma \in \Gamma$ for which $\gamma ^H$ is finite; this is easily seen to be a subgroup of $\Gamma$. The \textbf{FC-center} of $\Gamma$ is the subgroup $\FC _{\Gamma}(\Gamma )$ - i.e.\ the collection of all group elements with a finite conjugacy class.

\begin{prop}\label{prop:rk_discrete}
Let $\Gamma$ be a discrete group.
\begin{enumerate}
\item The value $\rk _{BC}(\Gamma )$ equals the supremum of the ranks of finitely generated free abelian subgroups $A$ of $\Gamma$ for which the index of $\FC _{\Gamma}(A)$ in $\Gamma$ is finite.
\item If $\Gamma$ is a finitely generated group, then $\rk _{BC}(\Gamma )$ is the supremum of the ranks of finitely generated free abelian subgroups $A$ of $\Gamma$ which are contained in the $\FC$-center of $\Gamma$.
\end{enumerate}
\end{prop}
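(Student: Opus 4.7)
The plan is to exploit the fact that in a discrete group every compact set is finite, so that $\BC_\Gamma(H)=\FC_\Gamma(H)$ for every subgroup $H$, and $\{1\}$ is a tidy compact open subgroup for every element of $\Gamma$. Consequently every subgroup of $\Gamma$ is flat and uniscalar, and the thin condition on $A\leq\Gamma$ reduces to $[\Gamma:\FC_\Gamma(A)]<\infty$. Combined with Lemma~\ref{lem:KL}, this gives that a finitely generated thin $B\leq\Gamma$ has $\RLE(B)$ finite, $B/\RLE(B)$ virtually free abelian, and $\rk_t(B)$ equal to the rank of a free abelian subgroup of finite index in $B/\RLE(B)$.

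For (1), the inequality $\rk_{BC}(\Gamma)\geq\sup$ is immediate: a finitely generated free abelian $A$ with $[\Gamma:\FC_\Gamma(A)]<\infty$ is thin on $\Gamma$ with $\RLE(A)=\{1\}$ and $\rk_t(A)=\rk(A)$. For the reverse inequality, given finitely generated thin $B\leq\Gamma$ with $\rk_t(B)=n$, the task is to extract an honest free abelian subgroup $A\leq B$ of rank $n$ from the abstractly virtually-free-abelian quotient $B/\RLE(B)$. I would do this via the standard commutator trick for central extensions: first pass from $B$ to the preimage $B'$ of a finite-index $\Zb^n\leq B/\RLE(B)$, then to the kernel $K$ of the $B'$-action on the finite group $\RLE(B)$, so that $\RLE(B)$ is central in the finite-index subgroup $K$ and $K/\RLE(B)\cong\Zb^n$. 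If $k_1,\ldots,k_n\in K$ lift a basis, then $[k_i,k_j]\in\RLE(B)$ is central, and the identity $[k_i^m,k_j]=[k_i,k_j]^m$ with $m=|\RLE(B)|$ forces the $k_i^m$ to be central in $K$ and hence to commute pairwise; the torsion-free quotient of $\langle k_1^m,\ldots,k_n^m\rangle$ gives a free abelian $A$ of rank $n$, and $A\leq B$ automatically yields $\FC_\Gamma(B)\leq\FC_\Gamma(A)$.

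For (2), the inequality $\rk_{BC}(\Gamma)\geq\sup$ is easy: a finitely generated free abelian $A\leq\FC_\Gamma(\Gamma)$ has $C_\Gamma(a)$ of finite index for each generator, so $C_\Gamma(A)\leq\FC_\Gamma(A)$ is of finite index, and part (1) applies. For the reverse, given $A$ free abelian of rank $n$ with $\FC_\Gamma(A)$ of finite index, I would invoke finite generation of $\Gamma$ as follows. Let $K\normal\Gamma$ be the normal core of $\FC_\Gamma(A)$, still of finite index and hence itself finitely generated, say $K=\langle k_1,\ldots,k_r\rangle$. For each basis element $a$ of $A$ and each $i$, the orbit $k_i^{\langle a\rangle}$ is finite because $k_i\in K\leq\FC_\Gamma(A)$, so some positive power of $a$ centralizes $k_i$; the LCM over $i$ produces a power $a^{N_a}$ centralizing all of $K$, whence $C_\Gamma(a^{N_a})\supseteq K$ has finite index and $a^{N_a}\in\FC_\Gamma(\Gamma)$. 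Taking a further LCM $N$ over a basis of $A$ yields the finite-index free abelian subgroup $\langle a_1^N,\ldots,a_n^N\rangle\leq\FC_\Gamma(\Gamma)$ of rank $n$.

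The main obstacle is the reverse inequality in part (1), where one must lift the rank computation from the quotient $B/\RLE(B)$ back to a genuine free abelian subgroup of $B$. Centralizing $\RLE(B)$ by passing to a finite-index subgroup and then applying the commutator trick is what ultimately produces an actual $\Zb^n$ rather than merely a virtually-$\Zb^n$ group; everything else is a direct appeal to the earlier lemmas.
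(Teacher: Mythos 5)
Your argument is correct, and part (2) (together with the easy inequalities) is essentially the paper's argument: both centralize a finite generating set of a finite-index subgroup of $\Gamma$ by passing to suitable powers/finite-index subgroups of $A$, landing a rank-$n$ free abelian subgroup in the $\FC$-center. Where you genuinely diverge is the reverse inequality in part (1). The paper never works in the quotient $B/\RLE(B)$: given a finitely generated thin $A$, it notes that $A\cap\FC_\Gamma(A)$ has finite index in $A$, centralizes a finite generating set of that intersection to obtain a finite-index \emph{abelian} subgroup of $A$, passes to a free abelian finite-index subgroup $B_0\le A$, and then invokes Lemma~\ref{lem:sgrp_thin_grp} to get $\rk(B_0)=\rk_t(A)$. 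You instead apply Lemma~\ref{lem:KL} to get $\RLE(B)$ finite with $B/\RLE(B)$ virtually $\Zb^n$, and extract an honest $\Zb^n$ by the central-extension commutator trick $[k_i^m,k_j]=[k_i,k_j]^m$ with $m=|\RLE(B)|$. Both routes work; the paper's is shorter and delegates the rank bookkeeping to Lemma~\ref{lem:sgrp_thin_grp}, while yours keeps the rank computation explicit at the price of the lifting argument. Two small imprecisions to repair, neither fatal: first, $\RLE(B)$ need not be contained in $K=C_{B'}(\RLE(B))$ unless $\RLE(B)$ is abelian, so the relevant quotient is $K/(K\cap\RLE(B))\cong K\RLE(B)/\RLE(B)$, a finite-index subgroup of $\Zb^n$; all your argument actually uses is that $K$ centralizes $\RLE(B)$, which is what makes $[k_i,k_j]\in\RLE(B)$ commute with $k_i$ and $k_j$. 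Second, the ``torsion-free quotient'' of $\langle k_1^m,\dots,k_n^m\rangle$ is not a subgroup of $B$; say instead that this finitely generated abelian group maps onto a finite-index subgroup of $\Zb^n$ with kernel contained in the finite group $\RLE(B)$, hence has torsion-free rank $n$ and contains a free abelian subgroup of rank $n$, which is the $A$ you feed into the supremum (and $\FC_\Gamma(B)\le\FC_\Gamma(A)$ then finishes it, as you say).
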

\begin{proof}
We note that since $\Gamma$ is discrete, BC-centralizers and FC-centralizers are the same.

For (1), suppose $A\leq \Gamma$ is finitely generated and thin on $\Gamma$. The subgroup $A\cap \FC_{\Gamma}(A)$ is of finite index in $A$, so it is finitely generated. Fix a finite generating set $F$. For each $a\in F$, the set $a^A$ is finite, so each $a\in F$ commutes with a finite index subgroup of $A$. There is thus a finite index subgroup $B$ of $A\cap \FC _{\Gamma}(A)$ which commutes with $F$ and hence with all of $A\cap \FC_{\Gamma}(A)$. The subgroup $B$ is thus an abelian finite index subgroup of $A$, and in particular, it is finitely generated.

We may find a free abelian finite index subgroup $B_0$ of $B$. The subgroup $B_0$, being a subgroup of $A$, is thin on $\Gamma$, and since $B_0$ has finite index in $A$, Lemma~\ref{lem:sgrp_thin_grp} ensures that the rank of $B_0$ coincides with $\rk_t(A)$. To compute $\rk_{BC}(\Gamma)$, it thus suffices to consider finitely generated free abelian thin subgroups of $\Gamma$.

For (2), it suffices to argue that every finitely generated free abelian thin subgroup $A$ of $\Gamma$ has a finite index subgroup contained in the $\FC$-center of $\Gamma$, in view of part (1). Given such a subgroup $A$, the subgroup $\FC_{\Gamma}(A)$ is of finite index in $\Gamma$, so it is finitely generated. Arguing as in part (1), we may find a finite index subgroup $B$ of $A$ which commutes with  $\FC _{\Gamma}(A)$. Since $B$ commutes with a finite index subgroup of $\Gamma$, it is contained in the $\FC$-center of $\Gamma$. The proposition now follows.
\end{proof}

Proposition~\ref{prop:rk_discrete} shows that every finitely generated group with positive BC-rank admits a non-locally finite FC-center. On the other hand, this does not generalize to the case of compactly generated \tdlc groups; see Example~\ref{ssec:pos_BC_rank_ex}. That is, for compactly generated \tdlc groups, positive BC-rank does not imply non-regionally compact $\cFC$-center.

We conclude this section by noting that non-zero $\BC$-rank gives a sufficient, but not necessary, condition to ensure inner amenability. One can thus think of positive $\BC$-rank as a strong algebraic form of inner amenability. In contrast to our results for $\BC$-rank, Example~\ref{ex:lattice invariant} shows that inner amenability fails to be a lattice invariant for lattices in sigma compact \tdlc groups.

\begin{prop}\label{prop:rk_discrete_inner_am}
Let $\Gamma$ be a discrete group with $\rk_{BC}(\Gamma)>0$. Then $\Gamma$ is inner amenable.
\end{prop}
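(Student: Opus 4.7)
The plan is to build a conjugation-invariant atomless mean on $\G\setminus\{e\}$ by taking a weak* cluster point of uniform probability measures on carefully chosen finite sets, thereby showing $\G$ is inner amenable. By Proposition~\ref{prop:rk_discrete}(1), the hypothesis provides a non-trivial finitely generated free abelian subgroup $A\le\G$ with $[\G:\FC_\G(A)]<\infty$. Fix any $a\in A$ of infinite order. Since $\FC_\G(A)\subseteq\FC_\G(\langle a\rangle)$, the cyclic subgroup $\langle a\rangle$ also satisfies $[\G:\FC_\G(\langle a\rangle)]<\infty$.

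For each finite $F\subseteq\G$, let $\G_F\defeq\langle F\cup\{a\}\rangle$, a finitely generated subgroup of $\G$, and $K_F\defeq\FC_{\G_F}(\langle a\rangle)$. Then $K_F\supseteq\G_F\cap\FC_\G(\langle a\rangle)$ has finite index in $\G_F$, hence is itself finitely generated. Each generator of $K_F$ commutes with some nonzero power of $a$ by the definition of $K_F$, so if $m_F\ge 1$ denotes the least common multiple of finitely many such powers, then $a^{m_F}$ centralizes all of $K_F$. Consequently $C_{\G_F}(a^{m_F})\supseteq K_F$ has finite index in $\G_F$, so $a^{m_F}$ lies in the $\FC$-center of $\G_F$, and its $\G_F$-conjugacy class $C_F\defeq (a^{m_F})^{\G_F}$ is finite; every element of $C_F$ has infinite order since $a$ does. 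Define the finite set
\[
S_{F,n}\defeq\bigcup_{b\in C_F}\{b,b^2,\dots,b^n\}\subseteq\G\setminus\{e\}.
\]
Since $\G_F$ permutes $C_F$ under conjugation, $\gamma S_{F,n}\gamma^{-1}=S_{F,n}$ for every $\gamma\in\G_F$, and in particular for every $\gamma\in F$; moreover $|S_{F,n}|\ge n$.

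Let $\mu_{F,n}$ be the uniform probability measure on $S_{F,n}$, regarded as a state on $\ell^\infty(\G)$ that vanishes at $\{e\}$. The net $(\mu_{F,n})$, indexed by the directed set of pairs $(F,n)$ with $F$ ordered by inclusion and $n$ by the usual order on $\mathbb{N}$, admits a weak* cluster point $\mu$ by Banach--Alaoglu. For each fixed $\gamma\in\G$, any subnet realizing $\mu$ is cofinally indexed by pairs with $\gamma\in F$, so along the tail of the subnet $\gamma\mu_{F,n}\gamma^{-1}=\mu_{F,n}$; passing to the weak* limit gives $\gamma\mu\gamma^{-1}=\mu$. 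For each $g\in\G$, cofinality in the $n$-coordinate forces $\mu_{F,n}(\{g\})\le 1/|S_{F,n}|\le 1/n\to 0$, hence $\mu(\{g\})=0$. Thus $\mu$ is a conjugation-invariant atomless mean, establishing inner amenability. The main subtlety is ensuring that, inside each finitely generated slice $\G_F$, the same fixed element $a$ has a nonzero power landing in the $\FC$-center of $\G_F$; this follows from the finite-index containment $K_F\supseteq\G_F\cap\FC_\G(\langle a\rangle)$ together with finite generation of $K_F$, in the spirit of the proof of Proposition~\ref{prop:rk_discrete}(2).
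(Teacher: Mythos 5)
Your proof is correct, but it takes a genuinely different route at the key step. The paper also starts from Proposition~\ref{prop:rk_discrete}(1) and reduces to an infinite cyclic $\langle a\rangle$ with $\FC_\G(\langle a\rangle)$ of finite index; it then works entirely inside the finite-index subgroup $B\defeq\FC_\G(A)$, observes that every $b\in B$ commutes with all but finitely many of the elements $a^{n!}$, takes a weak* cluster point of the point masses $\delta_{a^{n!}}$ to get a conjugation-invariant atomless mean on $B$, and finally invokes the theorem of Giordano--de la Harpe that inner amenability passes from a finite-index subgroup to the ambient group. You avoid that last citation entirely: by localizing to the finitely generated subgroups $\G_F=\langle F\cup\{a\}\rangle$ and showing that a suitable power $a^{m_F}$ lands in the $\FC$-center of $\G_F$ (essentially the argument of Proposition~\ref{prop:rk_discrete}(2) run inside each $\G_F$), you manufacture finite, $\G_F$-conjugation-invariant sets $S_{F,n}$ of non-identity elements of size at least $n$, and a cluster point of the uniform measures over the directed set of pairs $(F,n)$ is directly a conjugation-invariant atomless mean on $\G$ itself; the spread over $n$ distinct powers plays the role that the distinct elements $a^{n!}$ play in the paper. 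Each step checks out (finite index of $K_F$ in $\G_F$ gives finite generation, hence a common power $a^{m_F}$ centralizing $K_F$; infinite order of $a$ gives $|S_{F,n}|\ge n$ and $S_{F,n}\subseteq\G\setminus\{e\}$; the cluster-point argument for invariance and atomlessness is sound). What your approach buys is self-containedness—no appeal to the finite-index transfer result for inner amenability—at the cost of a more elaborate net construction; the paper's argument is shorter and more transparent granted that citation.
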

\begin{proof}
Fix $A\leq \Gamma$ a finitely generated free abelian subgroup of positive rank with $\mathrm{FC}_\Gamma (A)$ of finite index in $\Gamma$; such a subgroup exist by (1) of Proposition~\ref{prop:rk_discrete} since $\rk_{BC}(\Gamma)>0$. After passing to a subgroup of $A$, we may assume that $A$ is infinite cyclic with cyclic generator $a$.

Setting $B\defeq\FC_{\Gamma}(A)$, we have that $A\leq B$, and each $b\in B$ commutes with some positive power of $a$. Each $b\in B$ thus commutes with all but finitely many terms of the sequence $(a^{n!})_n$. Letting $\delta _n$ denote the point mass at $a^{n!}$, any weak${}^*$ cluster point of the sequence $(\delta _{n})_n$ in $\ell ^{\infty}(B)^*$ is a conjugation-invariant atomless mean on $B$. Hence, $B$ is inner amenable, and since $B$ has finite index in $\Gamma$, the group $\Gamma$ is inner amenable as well by \cite[Th\'{e}or\`{e}me 1]{GdlH91}.
\end{proof}

\begin{rmk}
Theorem~\ref{thm:rkcovol} along with Proposition~\ref{prop:rk_discrete} give insight into an interesting example due to Y. Cornulier. In \cite[Section 2.B]{Cor15}, Cornulier builds an example of a compactly generated \tdlc group with cocompact lattices $\Gamma$ and $\Delta$ such that $\Gamma$ has an infinite torsion FC-center, but $\Delta$ has a trivial FC-center.  Theorem~\ref{thm:rkcovol} shows the requirement that the FC-center of $\Gamma$ is torsion is necessary.
\end{rmk}

\section{Examples}

\subsection{Merzlyakov's groups}\label{ex:M-groups}

This example exhibits a group $\Gamma$ such that $\rk_{BC}(\Gamma )=\infty$, but every abelian subgroup of $\Gamma$ is finitely generated. This shows that taking the supremum in the definition of $\rk_{BC}$ is necessary.

 We consider groups which are inductive limits with the following form. Take $\Gamma _0$ to be the trivial group. Having defined $\Gamma _n$, let $\Gamma _{n+1}$ be the semidirect product $\Gamma _{n+1}\defeq  B_n\rtimes _{\alpha _n}\Gamma _n$, where $\alpha _n : \Gamma _n\curvearrowright  B_{n}$ is an action of $\Gamma _n$ by automorphisms on a nontrivial finite rank free abelian group $B_{n}$. Let $i_n : \Gamma _n \hookrightarrow \{ 1 \} \rtimes \Gamma _n \leq \Gamma _{n+1}$ be the natural inclusion map. Define $\Gamma = \Gamma _{(\alpha _n , B_n)_{n\in \Nb}}$ to be the inductive limit of this sequence. Thus,
\begin{align*}
\Gamma _{n} &= B_{n-1}\rtimes _{\alpha _{n-1}}(B_{n-2}\rtimes _{\alpha _{n-2}}(\cdots \rtimes _{\alpha _3}(B_2 \rtimes _{\alpha _2}(B_1 \rtimes _{\alpha _1}B_0))\cdots )), \\
\Gamma &= \{ b \in \prod _{i\in \Nb} B_i \mid b_i =1 \text{ for all but finitely many }i \} ,
\end{align*}
and the product $b\circledast c$ of the group elements $b,c \in \Gamma$ is determined by the rule
\[
(b\circledast c )_n = b_n \alpha _n (p_n (b))(c_n),
\]
where $p_n : \Gamma \rightarrow \Gamma _n$ denotes the projection map with $p_n (b)_i = b_i$ for $0\leq i<n$. The injective homomorphism $\hat{i}_n :\Gamma _n \rightarrow \Gamma$ given by
\[
\hat{i}_n (b)_i =
\begin{cases}
b_i &\text{if }0\leq i <n \\
1 &\text{if }i\geq n
\end{cases}
\]
is a section for $p_n$, so $\Gamma$ is the (internal) semidirect product $\Gamma  = \ker (p_n)\rtimes \hat{i}_n(\Gamma  _n )$.

In \cite{Mer69}, Merzlyakov constructs a sequence $(B_n ,\alpha _n )_{n\in \Nb}$ as above, for which the resulting inductive limit $\Gamma \defeq  \Gamma _{(B_n,\alpha _n )_{n\in \Nb}}$ has the following properties:
\begin{enumerate}
\item Each of the groups $\Gamma_n$ is torsion free, polycyclic, and has a finite index free abelian subgroup $A_n$, with $\mathrm{rk}(A_n)\rightarrow \infty$. Hence, $\Gamma$ has free abelian subgroups of arbitrarily large rank.
\item For each $n$, the image $P_{n} \defeq  \alpha _n (\Gamma _n)$ of $\Gamma _n$ in $\mathrm{Aut}(B_{n})$ is finite.
\item Every abelian subgroup of $\Gamma$ is finitely generated. In particular, $\Gamma$ has no free abelian subgroups of infinite rank.
\end{enumerate}
(In addition, Merzlyakov shows in the follow up paper \cite{Mer84} that $\Gamma$ has finite abelian section rank.) It follows from (1) and (2) that every orbit of the conjugation action of the abelian group $\hat{i}_n(A_n)$ on the finite index subgroup $\ker (p_n )\rtimes \hat{i}_n(A_n)$ of $\Gamma$ is finite. Thus, $\mathrm{BC}_{\Gamma }(\hat{i}_n(A_n))$ has finite index in $\Gamma$, and hence $\mathrm{rk}_{BC}(\Gamma ) \geq \mathrm{rk}(A_n) \rightarrow \infty$.

\subsection{Merzlyakov's groups and lattices}\label{ex:MerzLattice} We now show that Merzlyakov's group $\Gamma$ from Example 1 and the countably infinite rank free abelian group $B\defeq  \bigoplus _{n\in \Nb} B_n$ are cocompact lattices in the same \tdlc group $G$. Thus, while the existence of a rank $n$ free abelian thin subgroup is a lattice invariant for $n<\infty$, it is not a lattice invariant for $n=\infty$.

By (2) above, $P_n=\alpha _n(\Gamma _n)$ is a finite subgroup of $\mathrm{Aut}(B_n)$ for each $n\in \Nb$. The direct product $P\defeq \prod _{n\geq 0} P_n$ is therefore a profinite group, and $P$ acts continuously on the discrete group $B$ by automorphisms, via the coordinate-wise action $(p\cdot b)_n \defeq  p_n(b_n)$ for $p\in P$, $b\in B$, and $n\in \Nb$. Consider the associated semidirect product $G \defeq  B\rtimes P$, which is a \tdlc group having $B\rtimes \{ 1_P \}$ as a discrete cocompact normal subgroup. In particular, $B$ is isomorphic to a lattice in $G$.

It remains to show that $\Gamma$ is realized as a lattice in $G$. For each $n$, the composition $\alpha _n \circ p_n$ is a homomorphism from $\Gamma$ to $P_n$, hence we obtain a homomorphism $\varphi : \Gamma \rightarrow P$ defined by $\varphi (b)_n \defeq  \alpha _n (p_n(b))$. Define the map $i: \Gamma \rightarrow G$ by
\[
i(b)\defeq  (b, \varphi (b)),
\]
for $b\in \Gamma$. The map $i$ is injective, and we show that it is a homomorphism. Clearly $i(1_\Gamma )=1_G$. For $b,c\in \Gamma$ and $n\in \Nb$, we have
\[
(b\circledast c)_n = b_n \alpha _n (p_n (b))(c_n) = b_n \varphi (b)_n(c_n) = b_n (\varphi (b)\cdot c)_n,
\]
and hence $b\circledast c = b \odot (\varphi (b)\cdot c)$, where $\odot$ denotes group multiplication in $B=\bigoplus _n B_n$. Therefore,
\begin{align*}
i(b)i(c)=(b, \varphi (b))(c,\varphi (c)) = (b\odot (\varphi (b)\cdot c),\varphi (b)\varphi (c)) &= (b\circledast c , \varphi (b\circledast c)) \\
&= i(b\circledast c) .
\end{align*}
The image $i(\Gamma )$, of $\Gamma$ in $G$, is discrete since $i(\Gamma )$ intersects the compact open subgroup $\{ 1 \} \rtimes P$ trivially. In addition, it is clear that $G= i(\Gamma )(\{ 1 \} \rtimes P)$, and therefore $i(\Gamma )$ is a cocompact lattice in $G$.


\subsection{Inner amenability }\label{ex:lattice invariant} We here give an example, due to Caprace, that shows inner amenability is not a lattice invariant, even in the case that we restrict to lattices in \tdlc groups.

For a prime $p$, let $F_p$ be the finite field with $p$ elements. We denote by $F_p[t]$, $F_p[[t]]$, and $F_p((t))$ the polynomials, formal power series, and formal Laurent series with a single indeterminate $t$ over $F_p$, respectively. Under the natural topology on $F_p((t))$, $F_p((t))$ is a local field of characteristic $p$. The formal power series ring $F_p[[t]]$ is a compact open subring of $F_p((t))$, $F_p[t^{-1}]$ is a discrete subring, and $F_p[[t]]F_p[t^{-1}]= F_p((t))$.

Forming the local field $F_p((t^{-1}))$, the ring $F_p[[t^{-1}]]$ is a compact open subring. The special linear group $SL_3(F_p[[t^{-1}]])$ acts on $F_p((t^{-1}))^3$, and it preserves the subgroup $F_p[[t^{-1}]]^3$. We thus obtain an action of $SL_3(F_p[[t^{-1}]])$ on $F_p((t^{-1}))^3 / F_p[[t^{-1}]]^3=F_p[t]^3$, where $F_p[t]$ is equipped with the discrete topology. We set
\[
G \defeq \left(SL_3(F_p[t^{-1}])\ltimes F_p((t))^3\right)\times \left( SL_3(F_p[[t^{-1}]])\ltimes F_p[t]^3\right)
\]
and equip $G$ with the product topology. With this topology $G$ is a \tdlc group.

 Define
\[
\Delta\defeq \left(SL_3(F_p[t^{-1}])\ltimes F_p[t^{-1}]^3\right)\times F_p[t]^3
\]
and
\[
\Gamma\defeq  SL_3(F_p[t^{-1}])\ltimes F_p[t,t^{-1}]^3.
\]

\begin{claim} $\Delta$ and $\Gamma$ are cocompact lattices in $G$.
\end{claim}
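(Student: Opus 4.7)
The plan is to construct explicit embeddings of $\Delta$ and $\Gamma$ into $G$ and, in each case, verify discreteness and cocompactness by exhibiting a compact transversal.

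For $\Delta$, I would use the obvious product of factor-wise inclusions: include $SL_3(F_p[t^{-1}]) \ltimes F_p[t^{-1}]^3$ into the first factor of $G$ via $F_p[t^{-1}] \hookrightarrow F_p((t))$, and include $F_p[t]^3$ into the second factor of $G$ as its vector subgroup. Each factor is independently a cocompact lattice in the corresponding factor of $G$: for the first, $F_p[t^{-1}]^3$ is discrete in $F_p((t))^3$ with compact transversal $(tF_p[[t]])^3$ arising from the decomposition $F_p((t)) = F_p[t^{-1}] \oplus tF_p[[t]]$; for the second, $F_p[t]^3$ is discrete in $SL_3(F_p[[t^{-1}]]) \ltimes F_p[t]^3$ with compact transversal $SL_3(F_p[[t^{-1}]])$. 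A product of cocompact lattices in a product group is a cocompact lattice, so $\Delta$ is a cocompact lattice in $G$.

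For $\Gamma$, I would use a diagonal embedding. Let $\iota \colon SL_3(F_p[t^{-1}]) \hookrightarrow SL_3(F_p[[t^{-1}]])$ be the natural inclusion and $\pi \colon F_p[t, t^{-1}]^3 \to F_p[t]^3$ the composition $F_p[t, t^{-1}]^3 \hookrightarrow F_p((t^{-1}))^3 \twoheadrightarrow F_p((t^{-1}))^3 / F_p[[t^{-1}]]^3 = F_p[t]^3$. Then $\pi$ is surjective, has kernel $F_p[t, t^{-1}]^3 \cap F_p[[t^{-1}]]^3 = F_p[t^{-1}]^3$, and is $SL_3(F_p[t^{-1}])$-equivariant (because $F_p[[t^{-1}]]^3$ is $SL_3(F_p[[t^{-1}]])$-stable). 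Thus the rule $(\gamma, v) \mapsto ((\gamma, v), (\iota(\gamma), \pi(v)))$ defines a group homomorphism $\Gamma \to G$, for which I would take as candidate compact transversal
\[
W \defeq (\{1\} \times (tF_p[[t]])^3) \times (SL_3(F_p[[t^{-1}]]) \times \{0\}).
\]

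Discreteness follows because an element of the image lying in $W$ must have $\gamma = 1$ and $v \in F_p[t, t^{-1}]^3$ lying simultaneously in $(tF_p[[t]])^3 \cap F_p[t, t^{-1}]^3 = (tF_p[t])^3$ and in $F_p[[t^{-1}]]^3 \cap F_p[t, t^{-1}]^3 = F_p[t^{-1}]^3$, whose intersection is $\{0\}$. For cocompactness, given $((\gamma_1, v_1), (g_2, w_2)) \in G$, I would first left-multiply by $(\gamma_1^{-1}, 0) \in \Gamma$ to make the discrete $SL_3$-coordinate trivial (the second $SL_3$-coordinate remains automatically in the compact group $SL_3(F_p[[t^{-1}]])$), and then seek $v \in F_p[t, t^{-1}]^3$ with $\pi(v) = -w_2$ and $v + v_1 \in (tF_p[[t]])^3$. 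Surjectivity of $\pi$ produces some $v_0$ with $\pi(v_0) = -w_2$, so any solution has the form $v = v_0 + u$ with $u \in F_p[t^{-1}]^3$, and the first constraint becomes $u \equiv -(v_0 + v_1) \pmod{(tF_p[[t]])^3}$, uniquely solvable in $F_p[t^{-1}]^3$ by the direct sum $F_p((t)) = F_p[t^{-1}] \oplus tF_p[[t]]$ applied coordinate-wise. Thus $G = \Gamma \cdot W$, and $\Gamma$ is a cocompact lattice. The main technical point is this simultaneous use of the two decompositions, one in each of the local fields $F_p((t))$ and $F_p((t^{-1}))$, which together exhibit $F_p[t, t^{-1}]$ as the correct diagonal lattice coupling the two factors of $G$.
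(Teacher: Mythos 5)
Your proposal is correct and follows essentially the same route as the paper: the factor-wise embedding for $\Delta$ and the diagonal embedding $(\gamma,v)\mapsto((\gamma,v),(\iota(\gamma),\pi(v)))$ for $\Gamma$, with the compact open transversal $\bigl(\{1\}\times (tF_p[[t]])^3\bigr)\times\bigl(SL_3(F_p[[t^{-1}]])\times\{0\}\bigr)$ matching the paper's $\{1\}\times F_p[[t]]^3\times SL_3(F_p[[t^{-1}]])\times\{1\}$. You simply carry out in detail the discreteness and surjectivity verifications (via the two decompositions $F_p((t))=F_p[t^{-1}]\oplus tF_p[[t]]$ and $F_p((t^{-1}))=F_p[[t^{-1}]]\oplus tF_p[t]$) that the paper leaves implicit.
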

\begin{proof}
For $\Delta$, let $\psi:\Delta\rightarrow G$ by $((A,f),g)\mapsto ((A,f),(1,g))$. This map is injective, and the image $\psi(\Delta)$ is a cocompact discrete subgroup. Appealing to Theorem~\ref{thm:covolume}, $\Delta$ is a cocompact lattice in $G$.

For $\Gamma$, let $\psi:\Gamma\rightarrow G$ by $(A,f)\mapsto ((A,f),(A,f))$. This map is injective with discrete image, and
\[
\psi(\Gamma)\left(\{1\}\times F_p[[t]]^3\times SL_3(F_p[[t^{-1}]])\times \{1\}\right)=G.
\]
Using again Theorem~\ref{thm:covolume}, we conclude that $\Gamma$ is a cocompact lattice in $G$.
\end{proof}

The group $\Delta$ is inner amenable, because it has an infinite center.  On the other hand,  $\Gamma$ is not inner amenable. Indeed, if $m$ is a conjugation invariant mean on $\Gamma$, then since $SL_3(F_p[t^{-1}])$ is has Property (T), $m$ must concentrate on the set of elements of $\Gamma$ which commute with some finite index subgroup of $SL_3(F_p[t^{-1}])$. Every such element belongs to the center of $SL_3(F_p[t^{-1}])$, which is finite, so $m$ cannot be atomless. 

\subsection{Bounded conjugacy rank}\label{ex:rkcovol} This example shows that one cannot naively extend Theorem~\ref{thm:rkcovol} to all locally compact groups.

Take
\[
M\defeq
\begin{bmatrix}
    0   & 0 & 0 & -1 \\
   1    & 0 & 0& 3 \\
    0   & 1 & 0  & -3\\
   0    & 0 & 1  & 3
\end{bmatrix}
\]
The characteristic polynomial of $M$ is $p(\lambda)=\lambda^4-3\lambda^3+3\lambda^2-3\lambda +1$. One verifies that $p$ has two complex roots $z$ and $\ol{z}$ which lie on the unit circle but are not roots of unity and two distinct positive real roots $r$ and $s$ with $rs=1$. For any $n\in \Zb\setminus\{0\}$, it now follows that the matrix $M^n$ again has four distinct eigenvalues: $z^n$, $\ol{z}^n$, $s^n$, and $r^n$. Thus, $M^n$ cannot fix any $v\in \Zb^4\setminus \{\ol{0}\}$, and the action of $\Zb$ on $\Zb^4$ by powers of $M$ has no non-trivial finite orbit. Forming the semidirect product $\Zb^{4}\rtimes_M \Zb$ where $\Zb$ acts by powers of $M$, we infer that $\Zb^4\rtimes_M \Zb$ has a trivial $FC$-center. In view of Lemma~\ref{prop:rk_discrete}, $\rk_{BC}(\Zb^4\rtimes_M\Zb)=0$.

Recalling that taking powers of matrices extends to real number valued powers, we have an action of $\Rb$ on $\Rb^4$ by powers of $M$. As $M$ has two eigenvalues on the unit circle,
\[
V\defeq \{ x \in \Rb^4\mid \Rb. x \text{ is bounded}\}
\]
is a two dimensional subspace of $\Rb^4$.

We now form the semidirect power $G\defeq \Rb^4\rtimes_M \Rb$, and this group is a connected locally compact group. The subgroup $\Zb^4\rtimes_M\Zb$ is clearly a lattice in $G$ and $\rk_{BC}(\Zb^4\rtimes_M\Zb)=0$. On the other hand, we may find an irrational real number $\theta$ such that $M^{\theta}$ fixes $V$ pointwise. The subgroup $\Rb^4\rtimes_M\theta\Zb$ is then a cocompact subgroup of $G$ with $V$ in its center. Moreover, any natural way to extend the BC-rank to all locally compact groups must give $\Rb^4\rtimes_M\theta\Zb$ positive rank.

\subsection{ $\cFC$ elements}\label{ssec:pos_BC_rank_ex}

We finally give an example of a \tdlc group with positive bounded conjugacy rank but without non-trivial $\overline{\mathrm{FC}}$-elements.

Fix a prime $p$. For every prime $q$, let $\alpha_q$ denote the automorphism of the additive group $\Qb_p$ given by multiplication by $q$. Note that $\alpha _p$ has no nontrivial bounded orbits on $\Qb_p$ and in particular that the sequence $(\alpha _p ^{-n}(x))_{n\in \Nb}$ is unbounded for every nonzero $x\in \Qb_p$.

Fix a non-empty finite set of primes $S$ which are all distinct from $p$ and let $A$ denote the subgroup of $\mathrm{Aut}(\Qb_p)$ generated by the automorphism $\alpha _p$ along with the automorphisms $\alpha _q $ for $ q\in S$. The group $A$ is a free abelian group $A$ of rank $|S|+1$, because $A$ is isomorphic to the subgroup of the multiplicative group $\Qb^*$ of the rationals generated by $\{ p\} \cup S$. Let $A_S$ be the subgroup of $A$ generated by the elements $\alpha _q$ with $q\in S$. Since each prime other than $p$ is invertible in $\Zb_p$, the group $A_S$ may be viewed as a subgroup of the multiplicative group $\Zb_p^*$ of $\Zb_p$, and hence the action of $A_S$ on $\Qb_p$ has bounded orbits, since $\Zb_p^*$ acting on $\Qb_p$ has bounded orbits.

Form the semidirect product $G\defeq  \Qb_p\rtimes A$ where A has the discrete topology; the resulting group $G$ is a second countable \tdlc group. The action of the subgroup $\{ 0 \} \times A_S$ on $G$ by conjugation has bounded orbits. Indeed, the orbit of $(x,\alpha ) \in G$ is precisely $A_S(x)\times \{ \alpha \}$, which is bounded by the previous paragraph.  The group $G$ thus has BC-rank at least $\mathrm{rank}(A_S)=|S|$. Also, since $\Qb_p = \bigcup _n \alpha _p ^{-n} (\Zb_p)$, the group  G is compactly generated by the copy of $\Zb_p$ along with a finite generating set for $\{ 0 \} \times A$.

\begin{claim}
$G$ has no non-trivial $\overline{\mathrm{FC}}$ elements.
\end{claim}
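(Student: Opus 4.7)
The plan is to compute conjugation in $G$ explicitly and then split into cases based on the $A$-component of the element. Fix $g = (x,\alpha) \in G$ with $g \neq 1$. A direct computation, using that $A$ is abelian and acts on $\Qb_p$ by multiplication by rationals, gives
\[
(y,\beta)(x,\alpha)(y,\beta)^{-1} = \bigl(\beta(x) + (1-\alpha)(y),\ \alpha\bigr)
\]
for every $(y,\beta) \in G$. Since $A$ is discrete, a subset of $G$ is relatively compact if and only if it is contained in $K \times F$ for some compact $K \subseteq \Qb_p$ and finite $F \subseteq A$; thus it suffices to exhibit an unbounded set of first coordinates in the conjugacy class $g^G$.

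Case 1: $\alpha = 1$. Then the formula collapses to $(\beta(x), 1)$, so the first coordinates of $g^G$ equal the $A$-orbit $A(x)$. Since $g \neq 1$, we have $x \neq 0$, and then $\{\alpha_p^{-n}(x) : n \in \Nb\} = \{p^{-n}x\}$ is already unbounded in $\Qb_p$, as recorded in the setup of the example. Hence $g^G$ is not relatively compact.

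Case 2: $\alpha \neq 1$. Writing $\alpha = \alpha_p^{k_0}\prod_{q\in S}\alpha_q^{k_q}$, the automorphism $\alpha$ acts on $\Qb_p$ as multiplication by the rational $r = p^{k_0}\prod_{q\in S}q^{k_q}$, and by unique factorization $r \neq 1$. Taking $\beta = 1$ and letting $y$ range over $\Qb_p$, the first coordinate of the conjugate is $x + (1-r)y$. Because $1-r$ is a nonzero element of $\Qb_p$, multiplication by $1-r$ is a bijection of $\Qb_p$, so these first coordinates fill all of $\Qb_p$, which is unbounded. Hence $g^G$ is not relatively compact.

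Combining the two cases, no non-trivial $g \in G$ is an $\overline{\mathrm{FC}}$-element, proving the claim. I do not expect a real obstacle: the conjugation formula in an abelian-by-abelian semidirect product is routine, and the two cases each reduce to a one-line observation about $\Qb_p$ (the unboundedness of $p^{-n}x$-orbits in Case 1 and the invertibility of multiplication by $1-r$ in Case 2). The only mild subtlety is recognizing that since $A$ is discrete, relative compactness in $G$ forces boundedness in the $\Qb_p$-direction, which is why controlling the first coordinate alone is enough.
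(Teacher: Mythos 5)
Your proof is correct and follows essentially the same route as the paper: both arguments kill a nonzero $\Qb_p$-coordinate by conjugating with powers of $\alpha_p$, and both handle $\alpha\neq 1$ by conjugating with elements $(y,1)$ of the $\Qb_p$-factor, your version merely packaging the latter as surjectivity of multiplication by $1-r$ instead of exhibiting an explicit unbounded sequence. The only cosmetic difference is that you case-split on $\alpha$ rather than first forcing $x=0$.
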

\begin{proof}
 Suppose that $g\defeq  (x,\alpha )\in G$ is an $\overline{\mathrm{FC}}$-element, where $x\in \Qb_p$ and $\alpha \in A$. It must be the case that $x=0$ since otherwise the conjugacy class of $g$ is unbounded via conjugating $g$ by negative powers of $(0,\alpha _p )$. Thus, $g=(0,\alpha )$ for some $\alpha \in A$.

 Suppose toward a contradiction that $\alpha$ is not the identity automorphism, so there is some $y\in \Qb_p$ such that $\alpha (y)\neq y$. Since $y- \alpha (y)\neq 0$, the sequence $(\alpha _p^{-n}(y- \alpha (y) )_{n\in \Nb}$ is unbounded in $\Qb_p$. On the other hand, conjugating $g=(0,\alpha )$ by $(\alpha _p ^{-n}(y), 1)$ gives
\[
\begin{array}{rcl}
(\alpha _p ^{-n}(y), 1)(0,\alpha )(-\alpha _p ^{-n}(y), 1) & = & (\alpha _p ^{-n}(y)-\alpha (\alpha _p ^{-n}(y)) , \alpha ) \\
 &= & (\alpha _p ^{-n}(y-\alpha (y)) , \alpha ).
 \end{array}
\]
Thus, $g=(0,\alpha )$ has an unbounded conjugacy class in $G$, a contradiction. We conclude that $\alpha =1$ and that $g$ is trivial.
\end{proof}


\bibliographystyle{abbrv}
\bibliography{biblio}

\end{document}